\documentclass[12pt,twoside]{amsart}

\usepackage{times,a4wide,amssymb,amsmath,amsthm,}
\usepackage[draft=false]{hyperref}
\usepackage[alphabetic]{amsrefs}

\setlength\parskip{5pt}

\theoremstyle{plain}
\newtheorem{thm}{Theorem}
\newtheorem{prop}[thm]{Proposition}

\newtheorem{lemma}[thm]{Lemma}

\theoremstyle{remark}
\newtheorem*{rem}{Remark}
\newtheorem*{rems}{Remarks}
\theoremstyle{definition}

\newcommand{\C}{{\mathbb C}}

\newcommand{\D}{{\mathbb D}}

\newcommand{\E}{{\mathbb E}}
\newcommand{\V}{{\mathbb V}}

\newcommand{\Abs}[1]{\left|#1\right|}

\newcommand{\dl}{\operatorname{Li_2}}

\DeclareMathOperator{\cov}{Cov}

\title{Fluctuations in the zero set of the hyperbolic Gaussian analytic function}
\thanks{Supported by the Generalitat de Catalunya (grant 2009 SGR 1303) and the Spanish Ministerio de Econom\'ia y Competividad (project MTM2011-27932-C02-01)}
\author{Jeremiah Buckley}
\address{Dept.\ Matem\`atica Aplicada i An\`alisi, Universitat  de Barcelona, Gran Via 585, 08007 Bar\-ce\-lo\-na, Spain}
\email{jerry.buckley@ub.edu}
\begin{document}
\begin{abstract}
The zero set of the hyperbolic Gaussian analytic function is a random point process in the unit disc whose distribution is invariant under automorphisms
of the disc. We study the variance of the number of points in a disc of increasing radius. Somewhat surprisingly, we find a change of behaviour at a
certain value of the `intensity' of the process, which appears to be novel.
\end{abstract}
\maketitle

\section{Introduction and statement of results}
The hyperbolic Gaussian analytic function (GAF) is a random holomorphic function on the unit disc. This GAF is particularly interesting because the
distribution of its zero set is invariant under disc automorphisms. We begin with the definition and some elementary properties, further details and
proofs of these facts may be found in \cite{HKPV}. Fix a parameter $L>0$ and define
\[
f_L(z)=\sum_{n=0}^\infty a_n\binom{L+n-1}{n}^{1/2}z^n
\]
for $z\in\D$, where $(a_n)_{n=0}^\infty$ is a sequence of iid standard complex normal random variables, and
\[
\binom{L+n-1}{n}=\frac{\Gamma(L+n)}{\Gamma(n+1)\Gamma(L)}=\frac{L(L+1)\cdots(L+n-1)}{n!}.
\]
This sum almost surely defines a holomorphic function in the unit disc with associated covariance kernel
\[
K_L(z,w)=\E[f_L(z)\overline{f_L(w)}]=(1-z\overline{w})^{-L}.
\]
Moreover, the distribution of the zero set of $f_L$ is invariant under automorphisms of the disc, and $(f_L)_{L>0}$ are (essentially) the only GAFs with
this property. We denote the counting measure on the zero set of $f_L$ by $n_L$ and write $n_L(r)=n_L(D(0,r))$ to simplify the notation. The
Edelman-Kostlan formula yields
\begin{equation*}
\E[dn_L]=\frac1{4\pi}\Delta\log K_L(z,z)dm(z)=\frac{L}{\pi}\frac{dm(z)}{(1-|z|^2)^2}
\end{equation*}
so that the mean number of zeroes is given by $L$ times the hyperbolic measure (normalised appropriately). This means that we can think of the parameter
$L$ as corresponding to the `intensity' of the process. In particular
\begin{equation*}
\E[n_L(r)]=\frac{Lr^2}{1-r^2}.
\end{equation*}

We will be interested in the size of fluctuations of the zero set. There exists an analogous process in the plane, the zero set of the flat GAF whose
distribution is invariant under plane automorphisms. The flat GAF is also defined in terms of a parameter $L>0$ which can be thought of as corresponding
to the intensity of the process, and the mean number of zeroes is given by $L/\pi$ times the Lebesgue measure. Forrester and Honner \cite{FH} found that
the variance of the number of points in a set $D$ with piecewise smooth boundary is given by
\begin{equation}\label{ForHonres}
\frac{\zeta(3/2)}{8\pi^{3/2}} \sqrt{L}|\partial D|(1+o(1))
\end{equation}
as $\sqrt{L}|\partial D|\to\infty$, where $\zeta$ is the usual Riemann $\zeta$-function and $|\cdot|$ denotes the length; later Nazarov and Sodin
\cite{NS}*{Theorem 1.1} computed the variance exactly. Shiffman and Zelditch \cite{SZ08}*{Theorem 1.1} derived analogous formulae in compact m-dimensional
manifolds.

In the planar setting a dilation of the plane allows one to consider only the case $L=1$, but this does not hold in the hyperbolic case. It seems that in
the hyperbolic case, for large values of $L$ but for a fixed set $D$, a result identical to \eqref{ForHonres}, replacing $|\cdot|$ by the hyperbolic
length, is folkloric. We shall instead consider fixed $L$ and study the variance of the number of points in a disc of radius $r\rightarrow1^-$.

For one particular value of the intensity, $L=1$, Peres and Vir\'{a}g \cite{PV}*{Theorem 2} have completely described the distribution of the random
variable $n_1(r)$, and we recover \cite{PV}*{Corollary 3 (iii)}. Their results were proved by showing that the corresponding zero set is a determinantal
process, but this holds for no other value of $L$. Since we are interested in the full range of $L$, our techniques are accordingly quite different.

In this paper we compute the variance of $n_L(r)$ as $r\rightarrow1^-$, in various regimes of $L$. One feature to emerge from our computations is a change
of behaviour at $L=1/2$. This appears to be novel; we do not know of any other properties of the zero set that change at $L=1/2$. This may deserve further
investigation.

We write $o(1)$ to denote a quantity that can be made arbitrarily small as $r$ approaches $1$ but that may depend on $L$ unless explicitly stated
otherwise.

Our first result is the following.
\begin{thm}\label{varrto1}
(a) For each fixed $L>1/2$, as $r\rightarrow1^-$,
\[
\V[n_L(r)]= \frac{c_L}{1-r} (1+o(1)),
\]
where
\[
c_L=\frac{L^2}{2\pi}\int_{0}^{\infty} \frac1{(1+x^2)^L-1} \frac{x^2}{1+x^2}\,dx=\frac{L^2}{8\sqrt{\pi}}\sum_{n=1}^{\infty}
\frac{\Gamma(Ln-\frac12)}{\Gamma(Ln+1)}.
\]
Moreover the quantity $o(1)$ can be taken to be uniform in $L$ for all $L>1$.

(b) We have, as $r\rightarrow1^-$,
\[
\V[n_{1/2}(r)]=\frac1{8\pi} \frac{1}{1-r}\log\frac{1}{1-r} (1+o(1)).
\]

(c) For each fixed $L<1/2$, as $r\rightarrow1^-$,
\[
\V[n_L(r)]= \frac{c_L}{(1-r)^{2-2L}} (1+o(1)),
\]
where
\[
c_L=\frac{L^2\Gamma(\frac12-L)}{4\sqrt{\pi} \Gamma(1-L)}.
\]
\end{thm}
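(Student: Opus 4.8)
The plan is to express the variance through the covariance of $\log|f_L|$ on the circle of radius $r$, collapse the resulting double integral to a single angular integral by rotational invariance, and then read off the three regimes from the endpoint behaviour of a Gauss hypergeometric function, the transition at $L=1/2$ being exactly its convergence threshold. First I would pass from counting zeros to log-moduli. Since $f_L(0)\neq0$ almost surely, Jensen's formula gives $n_L(r)=rI'(r)$ with $I(r)=\frac1{2\pi}\int_0^{2\pi}\log|f_L(re^{i\theta})|\,d\theta$, so that $\V[n_L(r)]=r^2\,\partial_r\partial_s\cov(I(r),I(s))|_{s=r}$. The key analytic input is the standard fact that for a Gaussian analytic function the covariance of log-moduli is a universal function of the normalised covariance $\rho_L(z,w)=|K_L(z,w)|^2/(K_L(z,z)K_L(w,w))$, namely $\cov(\log|f_L(z)|,\log|f_L(w)|)=\tfrac14\dl(\rho_L(z,w))$.

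Next I would exploit the rotational invariance of $K_L$. A direct computation, using $1-|\phi_z(w)|^2=(1-|z|^2)(1-|w|^2)/|1-\bar z w|^2$, gives $\rho_L(re^{i\theta},s)=((1-r^2)(1-s^2)/(1-2rs\cos\theta+r^2s^2))^{L}$, depending only on the angular gap, so that $\cov(I(r),I(s))=\frac1{8\pi}\int_0^{2\pi}\dl(\rho_L(re^{i\theta},s))\,d\theta$. Expanding $\dl(\rho)=\sum_{k\ge1}\rho^k/k^2$ and using the identity $\frac1{2\pi}\int_0^{2\pi}(1-2\alpha\cos\theta+\alpha^2)^{-\beta}\,d\theta={}_2F_1(\beta,\beta;1;\alpha^2)$ turns the covariance into
\[
\cov(I(r),I(s))=\tfrac14\sum_{k\ge1}k^{-2}\,[(1-r^2)(1-s^2)]^{Lk}\,{}_2F_1(Lk,Lk;1;r^2s^2),
\]
a form to which I can apply $\partial_r\partial_s$ term by term.

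The asymptotics as $r\to1^-$ are then governed by the behaviour of ${}_2F_1(Lk,Lk;1;z)$ and its derivatives as $z\to1^-$, where the relevant exponent is $c-a-b=1-2Lk$. For $L>1/2$ every term has $1-2Lk<0$, each contributes at order $(1-r)^{-1}$ through the endpoint singularity $(1-z)^{1-2Lk}$ of the hypergeometric factor, and summing over $k$ reproduces the convergent series for $c_L$; the decay $\Gamma(Lk-\tfrac12)/\Gamma(Lk+1)\sim(Lk)^{-3/2}$ secures both the convergence and the uniformity claimed for $L>1$. At $L=1/2$ the $k=1$ term is the borderline case $c-a-b=0$, where ${}_2F_1(\tfrac12,\tfrac12;1;z)\sim\tfrac1\pi\log\tfrac1{1-z}$; differentiating the prefactor $[(1-r^2)(1-s^2)]^{1/2}$ fully while retaining this logarithm produces exactly the factor $\tfrac1{8\pi}\tfrac1{1-r}\log\tfrac1{1-r}$, the remaining $k\ge2$ terms staying at the negligible order $(1-r)^{-1}$. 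For $L<1/2$ the $k=1$ term has $c-a-b=1-2L>0$, so ${}_2F_1(L,L;1;z)$ converges at the endpoint; the dominant contribution now comes instead from differentiating $[(1-r^2)(1-s^2)]^{L}$, which yields $(1-r^2)^{2L-2}$ and hence the larger power $(1-r)^{-(2-2L)}$, all other terms being of lower order. Keeping the endpoint value $\Gamma(1-2L)/\Gamma(1-L)^2$ and simplifying by the duplication formula recovers exactly $c_L=\frac{L^2\Gamma(\tfrac12-L)}{4\sqrt\pi\,\Gamma(1-L)}$.

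The main obstacle is the diagonal analysis underlying this bookkeeping. Setting $s=r$ inside the $\theta$-integral forces $\rho_L\to1$ as $\theta\to0$, so that $\dl'$ and $\dl''$ blow up there, and one must rigorously justify differentiating the series in $r$ and $s$, interchanging sum and integral, and discarding subdominant cross terms, all with error control that is uniform in $L$ for $L>1$ in part (a). Concretely this amounts to isolating the singular contribution through the rescaling $x=\theta/(2(1-r))$ and tracking the precise $(1-z)$-power carried by each hypergeometric derivative. It is worth emphasising that in case (c) a single term dominates, so the constant is pinned down exactly, whereas in case (a) several contributions of the same order $(1-r)^{-1}$ must be assembled to match the stated series; this careful accounting of leading terms, rather than any one estimate, is where the real work lies.
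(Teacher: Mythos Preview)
Your route is viable and genuinely different from the paper's. Both approaches use the identity $\cov(\log|\hat f_L(z)|,\log|\hat f_L(w)|)=\tfrac14\dl(J_L(z,w))$ and collapse to a single angular integral via rotational invariance, but they diverge at the differentiation step. The paper does not use Jensen's formula. It starts from the bi-Laplacian variance formula, applies Stokes' theorem twice to pass to a boundary integral $\int_{\partial D}\int_{\partial D}\partial_{\bar z}\partial_{\bar w}\dl(J_L)\,d\bar z\,d\bar w$, and then computes these derivatives explicitly; the point is that a cancellation
\[
\frac1{J_1}\frac{\partial^2 J_1}{\partial\bar z\,\partial\bar w}-\frac1{J_1^2}\frac{\partial J_1}{\partial\bar z}\frac{\partial J_1}{\partial\bar w}=0
\]
kills the logarithmic piece, leaving the clean integrand $\frac{J_L}{1-J_L}$ (no dilogarithm, no series) on the circle. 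After the change of variables $x=\tfrac{2r^2}{(1-r^2)^2}(1-\cos\theta)$ one is left with a single positive integral of the form $\int_0^{A}\frac{1}{(1+x)^L-1}\frac{\sqrt x}{1+x}\frac{dx}{\sqrt{1-x/A}}$, whose asymptotics are obtained by elementary domain-splitting; the series for $c_L$ when $L>1/2$ drops out of a Beta-function expansion of $\int_0^\infty\frac{\sqrt x}{((1+x)^L-1)(1+x)}\,dx$, and the $L<1/2$ constant from a single Beta integral.

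What each buys: the paper's Stokes-plus-cancellation step packages all of the $\partial_r\partial_s$ bookkeeping into one positive integrand, so no cross-terms need to be tracked and the uniformity in $L>1$ follows from a monotonicity $\frac1{(1+x)^L-1}\le\frac1{(1+x)^M-1}$. Your hypergeometric expansion is elegant in that the transition at $L=1/2$ becomes precisely the convergence threshold $c-a-b=0$ of ${}_2F_1(L,L;1;\cdot)$, and in parts (b) and (c) the constant falls out of the endpoint value $\Gamma(1-2L)/\Gamma(1-L)^2$ with essentially no work. The cost is the obstacle you flag: in part (a) every $k$ contributes at the same order $(1-r)^{-1}$, and you must differentiate the product $[(1-r^2)(1-s^2)]^{Lk}{}_2F_1(Lk,Lk;1;r^2s^2)$ in both variables, control subleading hypergeometric terms uniformly in $k$, and justify termwise differentiation and summation through the $\theta=0$ singularity. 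This is doable but is exactly the labour that the paper's cancellation removes.
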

We also compute the behaviour of the variance for large $L$, for $L$ close to the critical value $1/2$ and for $L\to0$.
\begin{thm}\label{critvals}
(a) We have, as $L\to\infty$ and $r\to1^-$,
\[
\V[n_L(r)]=\frac{\zeta(3/2)}{8\sqrt{\pi}}\frac{\sqrt{L}}{1-r}(1+o(1))
\]
where the term $o(1)$ is uniform in $L$ and $r$. In other words
\[
\lim_{\substack{L\to\infty\\r\to1^-}}\frac{1-r}{\sqrt{L}}\V[n_L(r)]=\frac{\zeta(3/2)}{8\sqrt{\pi}}
\]
independent of the manner in which $L\to\infty$ and $r\to1^-$.

(b) We have
\[
\V[n_L(r)]= \frac{1-(1-r)^{2L-1}}{8\pi(2L-1)(1-r)}(1+o(1))
\]
as $L\rightarrow1/2^+$ and $r\rightarrow1^-$, where the quantity $o(1)$ is uniform in $L$ and $r$.

(c) If $L\rightarrow1/2^-$ and $r\rightarrow1^-$ then
\[
\V[n_L(r)]= \frac{1-(1-r)^{1-2L}}{8\pi(1-2L)(1-r)^{2-2L}} (1 + o(1))
\]
where the quantity $o(1)$ is uniform in both $L$ and $r$.

(d) If $L\rightarrow0^+$, $r\rightarrow1^-$ and $\frac L{1-r}\to\infty$ then
\[
I_L(r)= \frac{L^2}4\frac{(1-r)^{2L-2}}{1-(1-r)^{2L}} (1 + o(1))
\]
where the quantity $o(1)$ is uniform in $L$, $r$, and $\frac L{1-r}$.
\end{thm}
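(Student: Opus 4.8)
The plan is to extract the stated asymptotics directly from the integral representation of $I_L(r)$ by showing that, in this regime, the whole integral is governed by its upper endpoint. Recall that the principal term $I_L(r)$ reduces, via the substitution $x=\psi/(2(1-r))$ already used for Theorem~\ref{varrto1}(a), to an integral of the form
\[
I_L(r)=\frac{L^2}{2\pi(1-r)}\int_{0}^{X}\frac{1}{(1+x^2)^{L}-1}\,\frac{x^2}{1+x^2}\,dx,\qquad X=\frac{\pi}{2(1-r)} .
\]
For fixed $L>1/2$ the integral converges as $X\to\infty$ and one recovers part~(a); for $L<1/2$ it diverges and the cutoff at $X\asymp(1-r)^{-1}$ produces the exponent $2-2L$. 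When $L\to0^{+}$ the factor $((1+x^2)^{L}-1)^{-1}$ decays so slowly that the integrand increases across the whole range, so I expect the mass to concentrate near $x=X$; making this precise is the first step.

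First I would pass to the variable $y=L\log(1+x^2)$, under which $\tfrac{x^2}{1+x^2}=1-e^{-y/L}$ and
\[
I_L(r)=\frac{L^2}{2\pi(1-r)}\cdot\frac{1}{2L}\int_{0}^{Y}\frac{e^{y/(2L)}}{e^{y}-1}\bigl(1+o(1)\bigr)\,dy,\qquad Y=L\log(1+X^2)=2L\log\tfrac{1}{1-r}+O(L),
\]
so that $e^{Y/(2L)}=\sqrt{1+X^2}\asymp(1-r)^{-1}$ and $e^{Y}=(1+X^2)^{L}\asymp(1-r)^{-2L}$. Because $1/(2L)\to\infty$, the weight $e^{y/(2L)}$ concentrates all the mass at the right endpoint, and an endpoint (Watson-type) estimate gives
\[
\int_{0}^{Y}\phi(y)\,e^{y/(2L)}\,dy=2L\,\phi(Y)\,e^{Y/(2L)}\bigl(1+o(1)\bigr),\qquad \phi(y)=\frac{1}{e^{y}-1}.
\]
Inserting $\phi(Y)=(e^{Y}-1)^{-1}$ and $e^{Y/(2L)}=\tfrac{\pi}{2(1-r)}(1+o(1))$ collapses the integral to $\tfrac{\pi}{2}\,\dfrac{(1-r)^{-1}}{(1-r)^{-2L}-1}(1+o(1))$; multiplying by $L^2/(2\pi(1-r))$ cancels the $\pi/2$ against the $2\pi$ and yields exactly $\dfrac{L^2}{4}\dfrac{(1-r)^{2L-2}}{1-(1-r)^{2L}}$.

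The hard part will be the uniformity of the error across the triple limit, where $(1-r)^{2L}$ (equivalently $e^{-Y}$) may range over all of $(0,1)$. Two estimates must be uniform. The relative error in the endpoint lemma is controlled by $L\,|\phi'/\phi|(Y)=L/(1-e^{-Y})\le\max\{L,\,L/Y\}$; since $Y\asymp L\log\tfrac1{1-r}$ one has $L/Y\asymp 1/\log\tfrac1{1-r}$, so this is $o(1)$ uniformly, the decisive point being that $r\to1^{-}$ forces $Y\gg L$ however $L$ and $r$ are coupled. Secondly, the region $x=O(1)$, where the approximation $\tfrac{x^2}{1+x^2}\approx1$ fails, contributes $O(1/L)$ to the integral and hence $O(L/(1-r))$ to $I_L(r)$; comparing with the main term, the ratio is
\[
O\!\left(\frac{(1-r)\bigl((1-r)^{-2L}-1\bigr)}{L}\right),
\]
and it is exactly here that the hypothesis $\tfrac{L}{1-r}\to\infty$ is needed, for it controls the borderline case in which $(1-r)^{2L}$ stays bounded away from $0$ and $1$ (so that $(1-r)^{-2L}-1=O(1)$ and the ratio is $O((1-r)/L)$). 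Combining the $L\to0$, $r\to1$ control of the endpoint lemma with the $\tfrac{L}{1-r}\to\infty$ control of the bulk gives the asserted uniform $o(1)$, completing the proof.
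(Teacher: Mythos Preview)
Your proposal addresses only part~(d); parts (a)--(c) of the theorem are not treated (your passing remark ``one recovers part~(a)'' refers to Theorem~\ref{varrto1}(a), not to the $L\to\infty$ statement of Theorem~\ref{critvals}(a), which needs the separate computation of $\sum_n\Gamma(Ln-\tfrac12)/\Gamma(Ln+1)\sim L^{-3/2}\zeta(3/2)$).

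For part~(d) itself there is a genuine gap in the first line. The displayed formula
\[
I_L(r)=\frac{L^2}{2\pi(1-r)}\int_{0}^{X}\frac{1}{(1+x^2)^{L}-1}\,\frac{x^2}{1+x^2}\,dx,\qquad X=\frac{\pi}{2(1-r)},
\]
is \emph{not} an exact consequence of any substitution in Lemma~\ref{rdecomp}: the substitution $x=\theta/(2(1-r))$ is a linearisation of $\sin(\theta/2)$, and it drops the Jacobian factor $(1-\tfrac{(1-r^2)^2}{4r^2}x)^{-1/2}$ that appears in the exact representation~\eqref{intinx}. For $L>1/2$ this is harmless because the mass sits at small $\theta$; but in the regime of~(d) the integral is, by your own argument, governed by the \emph{upper} endpoint $\theta\approx\pi$, exactly where the linearisation fails and the omitted factor blows up. You still land on the correct constant, but only because when $L\to0$ the integrand depends on $x$ through $L\log(1+x^2)$, and replacing $\sin^2(\theta/2)$ by $\theta^2/4$ perturbs this by $O(L)$, which is $o(1)$ relative to $L\log\tfrac1{1-r}$; that cancellation must be argued, not assumed. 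A second, smaller issue: after your approximation $\sqrt{e^{y/L}-1}\approx e^{y/(2L)}$ the integrand $e^{y/(2L)}/(e^y-1)$ has a non-integrable singularity at $y=0$, so the integration-by-parts underlying the ``Watson-type'' endpoint lemma has a divergent lower boundary term; your separate bound on $x=O(1)$ rescues this, but the lemma as stated is not literally applicable.

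The paper's route avoids both problems by working with the exact integral~\eqref{intinx}. It chooses a cutoff $\delta=\delta(L,r)\to0$ with $\delta/(1-r)^2\to\infty$ and shows directly that on $[\delta/(1-r)^2,\,4r^2/(1-r^2)^2]$ the factor $(1+x)^L-1$ equals $\tfrac{1-(1-r)^{2L}}{(1-r)^{2L}}(1+o(1))$; what remains is then the genuine Beta integral $\int_0^1 y^{-1/2}(1-y)^{-1/2}\,dy=B(\tfrac12,\tfrac12)=\pi$, which produces the constant without any endpoint asymptotics. The lower ranges are bounded by elementary estimates, and the hypothesis $L/(1-r)\to\infty$ enters exactly as in your analysis, to control the $x=O(1)$ contribution.
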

\begin{rems}
1. Noting that the hyperbolic length of the circle of radius $r$ is given by $\frac{2\pi r}{1-r^2}$ we see that (a) is consistent with replacing $|\cdot|$
by the hyperbolic length in \eqref{ForHonres}.

2. We impose the condition $\frac L{1-r}\to\infty$ in (d) because it is equivalent to $\E[n_L(r)]\to\infty$.
\end{rems}
In the particular cases $L=1,2$ we can show the following more precise result.
\begin{thm}\label{varLis1or2}
For any $0<r<1$
\[
\V[n_1(r)]=\frac{r^2}{1-r^4}
\]
and
\[
\V[n_2(r)]=\frac{4r^2}{1-r^2}\left(\frac1{1+r^2}-\frac1{2\sqrt{1+r^4}}\right)
\]
\end{thm}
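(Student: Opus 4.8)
The plan is to evaluate, at the two integer values $L=1,2$, the exact series for the variance that drives Theorems~\ref{varrto1} and \ref{critvals}. I recall how that series is obtained, since the specialisation hinges on its shape. Writing $M(r)=\frac1{2\pi}\int_0^{2\pi}\log\Abs{f_L(re^{i\theta})}\,d\theta$, Jensen's formula gives $n_L(r)=r\,M'(r)$, so that $\V[n_L(r)]=r^2\,\partial_r\partial_s\,\cov(M(r),M(s))\big|_{s=r}$. The Sodin--Tsirelson identity expresses the covariance of $\log\Abs{f_L}$ through the normalised kernel $\rho_L(z,w)=\Abs{K_L(z,w)}^2/\bigl(K_L(z,z)K_L(w,w)\bigr)$ as $\cov\bigl(\log\Abs{f_L(z)},\log\Abs{f_L(w)}\bigr)=\tfrac14\dl\bigl(\rho_L(z,w)\bigr)$. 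Since $\rho_L$ depends only on $\theta-\phi=:\psi$, the double angular average collapses to one variable; expanding $\dl$ and applying Parseval to each power $\frac1{2\pi}\int_0^{2\pi}\rho_L^k\,d\psi$ gives, after differentiating in $r$, the exact expression
\[
\V[n_L(r)]=\sum_{k=1}^\infty\frac1{k^2}\,(1-r^2)^{2Lk-2}\sum_{m=0}^\infty\binom{Lk+m-1}{m}^2 r^{4m}\bigl[m-(m+Lk)r^2\bigr]^2 .
\]
The whole task is to sum this in closed form when $L=1$ and $L=2$.

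For $L=1$ the cleanest route is the determinantal description of Peres--Vir\'ag: the zero set is the determinantal process with the Bergman kernel $\mathbb K(z,w)=\pi^{-1}(1-z\overline w)^{-2}$ for area measure, whence
\[
\V[n_1(r)]=\int_{D(0,r)}\mathbb K(z,z)\,dm(z)-\int_{D(0,r)}\int_{D(0,r)}\Abs{\mathbb K(z,w)}^2\,dm(z)\,dm(w).
\]
The first term is $\E[n_1(r)]=r^2/(1-r^2)$; in the second, rotation invariance and Parseval $\bigl(\frac1{2\pi}\int_0^{2\pi}\Abs{1-ae^{i\psi}}^{-4}\,d\psi=\sum_m(m+1)^2a^{2m}=(1+a^2)/(1-a^2)^3\bigr)$ reduce the double integral to an elementary planar integral equal to $r^4/(1-r^4)$. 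Subtracting gives $r^2/(1-r^4)$ and recovers \cite{PV}*{Corollary 3(iii)}; the displayed series of course produces the same value, but the determinantal shortcut bypasses the resummation.

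For $L=2$ the process is not determinantal and we must sum the series directly. The leverage from $L$ being an integer is that, for integer $Lk$, the inner $m$-sum is rational in $r$ (it is built from ${}_2F_1(Lk,Lk;1;r^4)$ and its derivatives, equivalently an integer power of the Poisson-type kernel $\Abs{1-r^2e^{i\psi}}^{-2}$ integrated in $\psi$), so each $k$-term is an explicit rational function. To organise the outer sum I use the duplication identity $\dl(\sigma^2)=2\dl(\sigma)+2\dl(-\sigma)$ applied to $\sigma=\rho_1$ (note $\rho_2=\rho_1^2$), which gives $\V[n_2(r)]=2\,\V[n_1(r)]+2Q(r)$, where, writing $V_j(r)=(1-r^2)^{2j-2}\sum_{m\ge0}\binom{j+m-1}{m}^2 r^{4m}[m-(m+j)r^2]^2$ so that $\V[n_1(r)]=\sum_{j\ge1}j^{-2}V_j(r)$, the new term is the alternating sum $Q(r)=\sum_{j\ge1}(-1)^j j^{-2}V_j(r)$. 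The rational contributions sum geometrically and, together with $2\,\V[n_1(r)]$, collapse to $4r^2/(1-r^4)$; the essential new phenomenon is that the squared binomials inside $Q(r)$ resum through the central-binomial generating function $\sum_{k\ge0}\binom{2k}{k}(-r^4/4)^k=(1+r^4)^{-1/2}$, contributing the algebraic term $-2r^2/\bigl((1-r^2)\sqrt{1+r^4}\bigr)$ and hence the stated formula.

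The main obstacle is exactly this resummation of $Q(r)$ for $L=2$: one must carry out the two $r$-derivatives on each rational $k$-term, track the diagonal evaluation $s=r$ carefully, and then recognise the resulting alternating double series as a combination of geometric and central-binomial generating functions. The appearance of $\sqrt{1+r^4}$ shows that no purely rational, term-by-term summation can close the computation; pinning down the correct generating-function identity, and verifying that all remaining rational contributions collapse to $\frac{4r^2}{1-r^2}\bigl(\frac1{1+r^2}-\frac1{2\sqrt{1+r^4}}\bigr)$, is the delicate part.
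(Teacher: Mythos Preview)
Your $L=1$ argument is correct and complete: the determinantal formula for the Bergman process gives $\V[n_1(r)]=\E[n_1(r)]-\iint_{D(0,r)^2}|\mathbb K|^2$, and your Parseval computation of the double integral as $r^4/(1-r^4)$ is right. This is genuinely different from the paper's route, which reduces $\V[n_L(r)]$ to a one-variable integral $I_L(r)$ (Lemma~\ref{rdecomp}) and then evaluates $I_1(r)$ by a contour integral over $\partial\D$ with a single residue at $z=r^2$. Your approach is shorter for $L=1$ but special to that value; the paper's is uniform in integer $L$.

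For $L=2$ there is a genuine gap. The duplication identity $\dl(\sigma^2)=2\dl(\sigma)+2\dl(-\sigma)$ is correct and gives the clean decomposition $\V[n_2(r)]=2\V[n_1(r)]+2Q(r)$, and the target value of $Q(r)$ is indeed $\frac{r^2}{1-r^4}-\frac{r^2}{(1-r^2)\sqrt{1+r^4}}$. But you do not actually evaluate $Q(r)$: the claim that ``the squared binomials resum through the central-binomial generating function'' is a heuristic, not a computation, and you yourself flag the resummation as ``the delicate part'' without carrying it out. No concrete identity is stated that turns your alternating double series $\sum_{j\ge1}(-1)^j j^{-2}V_j(r)$ into the closed form. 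As it stands, the $L=2$ case is not proved.

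The paper avoids this difficulty entirely. After Stokes' theorem the bilaplacian of $\dl(J_L)$ collapses (the cross term $\tfrac1{J_1}\partial_{\bar z}\partial_{\bar w}J_1-\tfrac1{J_1^2}\partial_{\bar z}J_1\,\partial_{\bar w}J_1$ vanishes identically), leaving a single integral $I_L(r)$ over $\theta\in(-\pi,\pi)$. For integer $L$ this becomes a rational contour integral on $\partial\D$; for $L=2$ the relevant poles are the roots of $(1-r^2z)(z-r^2)\pm z(1-r^2)^2$, and the residue at the interior root of the ``minus'' factor produces the $\sqrt{1+r^4}$ directly. If you want to salvage your decomposition, note that your $Q(r)$ is obtained from the paper's boundary integral by replacing $\frac{J_1}{1-J_1}$ with $\frac{-J_1}{1+J_1}$; this is again a rational contour integral with exactly the ``$\omega=-1$'' pole of the paper's computation, and one residue gives $Q(r)$ without any series manipulation.
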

\begin{rem}
The $L=1$ result was first given by Peres and Vir\'{a}g \cite{PV}*{Corollary 3 (iii)}, as we mentioned earlier.
\end{rem}

The paper is structured as follows: In Section~\ref{key} we reduce the computation of the variance of $n_L(r)$ to the evaluation of an integral of a
positive function of one real variable; this reduction is the main ingredient in our work. In Section~\ref{L12} we prove Theorem~\ref{varLis1or2} by
computing this integral exactly for $L=1$ and $L=2$. In Section~\ref{asymptotics} we prove Theorems~\ref{varrto1} and \ref{critvals} by computing the
asymptotics of this integral as $r\to1^-$.

We shall use the following standard notation: The expression $f \lesssim g$ means that there is a constant $C$ independent of the relevant variables such
that $f\le C g$, and $f\simeq g$ means that $f\lesssim g$ and $g\lesssim f$. We sometimes write $f=O(g)$ to mean $|f| \lesssim g$.

\section{The key lemma}\label{key}
In this section we prove a lemma which allows us to compute the variance of $n_L(r)$ by evaluating an integral of a positive function of one real
variable. The starting point in our computations is the following formula (see \cite{SZ08}*{Theorem~3.1} or \cite{NS}*{Lemma~2.3}). For any (sufficiently
nice) $D\subset\D$ we have
\begin{equation}\label{varform}
\V[n_L(D)]=\int_{D}\int_{D} \Delta_z\Delta_w \frac14\dl(J_L(z,w)) \frac{dm(z)}{2\pi} \frac{dm(w)}{2\pi}
\end{equation}
where we define the dilogarithm
\[
\dl(\zeta)=\sum_{n=1}^\infty\frac{\zeta^n}{n^2}
\]
and
\[
J_L(z,w)=\frac{|K_L(z,w)|^2}{K_L(z,z)K_L(w,w)}=J_1(z,w)^L
\]
where
\[
J_1(z,w)=\frac{(1-|z|^2)(1-|w|^2)}{|1-z\overline{w}|^2}=1-\Abs{\frac{z-w}{1-z\overline{w}}}^2.
\]

For completeness, we will sketch a proof of this: Detailed computations can be found in \cite{SZ08}*{Section 3} or \cite{NS}*{Section 2.1}. Green's
formula implies that
\[
dn_L= \frac{1}{2\pi} \Delta \log|f_L|\,dm
\]
(this equality is to be understood in the distributional sense) which combined with the Edelman-Kostlan formula gives
\[
d\tilde{n}_L=dn_L-\E\left[dn_L\right]= \Delta\log|\hat{f}_L(z)|\frac{dm(z)}{2\pi}.
\]
where $\hat{f}_L(z)=\frac{f_L(z)}{K_L(z,z)^{1/2}}$. Thus
\[
\E\left[d\tilde{n}_L\times d\tilde{n}_L\right] = \Delta_z\Delta_w \E\left[\log|\hat{f}_L(z)| \log|\hat{f}_L(w)|\right] \frac{dm(z)}{2\pi}
\frac{dm(w)}{2\pi},
\]
where the exchange of expectation and the Laplacians is justified in the distributional sense by integrating against smooth compactly supported
(deterministic) test functions. Note that $\hat{f}_L(z)$ is a $\mathcal{N}_\C(0,1)$ random variable for each $z\in\C$. Thus $\E[\log|\hat{f}_L(z)|]$ is
independent of $z$ and so
\[
\Delta_z \E[\log|\hat{f}_L(z)|]= \Delta_w \E[\log|\hat{f}_L(w)|]=0.
\]
Thus
\[
\E\left[d\tilde{n}_L\times d\tilde{n}_L\right] = \Delta_z \Delta_w \cov\left[\log|\hat{f}_L(z)|, \log|\hat{f}_L(w)|\right] \frac{dm(z)}{2\pi}
\frac{dm(w)}{2\pi}
\]
where $\cov$ indicates covariance. We may therefore apply the following lemma.
\begin{lemma}[\citelist{\cite{SZ08}*{Lemma 3.3} \cite{NS}*{Lemma 2.2}}]
If $\zeta_1$ and $\zeta_2$ are $\mathcal{N}_\C(0,1)$ random variables with $\E[\zeta_1\bar{\zeta_2}]=\theta$ then
\[
\cov[\log|\zeta_1|,\log|\zeta_2|]= \frac14 \dl(|\theta|^2)
\]
\end{lemma}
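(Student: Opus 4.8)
The plan is to reduce everything to the joint law of the squared moduli and then compute explicitly. First I would write $\log|\zeta_j|=\tfrac12\log|\zeta_j|^2$, so that it suffices to evaluate $\cov[\log U,\log V]$ with $U=|\zeta_1|^2$ and $V=|\zeta_2|^2$; the claim then reads $\cov[\log U,\log V]=\dl(|\theta|^2)$. Since $\log|\zeta_j|$ is unchanged when $\zeta_j$ is multiplied by a unimodular constant, only $\rho:=|\theta|^2$ can enter the answer (and $\rho\le1$ by Cauchy--Schwarz), so I may assume $\theta=\sqrt\rho\ge0$. The marginals are unaffected by the correlation: each of $U,V$ is a unit-mean exponential, so $\E[\log U]=\E[\log V]=\psi(1)=-\gamma$, a fact I record for consistency.

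The first substantive step is to produce the joint density of $(U,V)$. Writing the density of the bivariate complex Gaussian $(\zeta_1,\zeta_2)$ with Hermitian covariance $\bigl(\begin{smallmatrix}1&\theta\\\bar\theta&1\end{smallmatrix}\bigr)$ and passing to polar coordinates $\zeta_j=\sqrt{u_j}\,e^{i\alpha_j}$, all the phase dependence collects into a single factor $\exp\!\bigl(\tfrac{2\sqrt{\rho uv}}{1-\rho}\cos(\alpha_1-\alpha_2)\bigr)$. Integrating out the two angles converts this into a modified Bessel function, giving
\[
p_{U,V}(u,v)=\frac{1}{1-\rho}\,e^{-\frac{u+v}{1-\rho}}\,I_0\!\left(\frac{2\sqrt{\rho uv}}{1-\rho}\right),\qquad u,v>0.
\]
When $\rho=0$ this is the product of two unit exponentials, as it must be.

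Next I would expand $I_0$ in its power series $I_0(x)=\sum_k (x/2)^{2k}/(k!)^2$ and integrate term by term. Rather than attack $\E[\log U\log V]$ directly, it is cleaner to compute the joint Mellin transform
\[
\E[U^aV^b]=(1-\rho)^{a+b+1}\sum_{k=0}^\infty\frac{\rho^k\,\Gamma(k+a+1)\Gamma(k+b+1)}{(k!)^2},
\]
using $\int_0^\infty u^{k+a}e^{-u/(1-\rho)}\,du=(1-\rho)^{k+a+1}\Gamma(k+a+1)$. Since $\E[U^aV^b]=\E[e^{a\log U+b\log V}]$ is the moment generating function of $(\log U,\log V)$, the covariance is the mixed second cumulant $\cov[\log U,\log V]=\partial_a\partial_b\log\E[U^aV^b]\big|_{a=b=0}$. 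The advantage of this route is that the prefactor $(1-\rho)^{a+b+1}$ is log-linear in $(a,b)$, hence annihilated by $\partial_a\partial_b$, so the computation collapses to differentiating $\log S(a,b)$, where $S$ is the displayed series. Via $\partial_a\partial_b\log S=(S\,S_{ab}-S_aS_b)/S^2$, evaluating at the origin reduces the whole problem to $S(0,0)=\sum_k\rho^k=(1-\rho)^{-1}$ together with the two digamma sums $\sum_k\rho^k\psi(k+1)$ and $\sum_k\rho^k\psi(k+1)^2$.

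The main obstacle is the evaluation of these digamma sums. Writing $\psi(k+1)=\psi(1)+H_k$ with $H_k=\sum_{j=1}^k 1/j$, I would invoke the generating functions $\sum_k H_k\rho^k=-\log(1-\rho)/(1-\rho)$ and, the crucial one, $\sum_k H_k^2\rho^k=\bigl(\log^2(1-\rho)+\dl(\rho)\bigr)/(1-\rho)$. Feeding these into $(S\,S_{ab}-S_aS_b)/S^2$ at the origin, every term involving $\psi(1)$ and $\log(1-\rho)$ cancels against the square $S_aS_b$, and one is left with exactly $\dl(\rho)$. Restoring the factor $\tfrac14$ from $\log|\zeta_j|=\tfrac12\log|\zeta_j|^2$ then gives $\cov[\log|\zeta_1|,\log|\zeta_2|]=\tfrac14\dl(|\theta|^2)$. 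The one point demanding care is the interchange of summation with both integration and the $a,b$-differentiation, which is justified for $\rho<1$ by the rapid decay of the Gamma factors; the endpoint $\rho=1$, where the two variables become proportional, then follows by continuity since both sides are continuous in $\rho$.
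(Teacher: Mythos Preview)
Your argument is correct. The joint density of $(|\zeta_1|^2,|\zeta_2|^2)$ is exactly the bivariate exponential with Bessel kernel you wrote down, the Mellin transform computation is valid, and the harmonic-number generating function $\sum_{k\ge1}H_k^2\rho^k=\bigl(\log^2(1-\rho)+\dl(\rho)\bigr)/(1-\rho)$ is the right identity; the cancellation you describe does occur and leaves precisely $\dl(\rho)$.

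Note, however, that the paper does \emph{not} give its own proof of this lemma: it is simply quoted from \cite{SZ08}*{Lemma~3.3} and \cite{NS}*{Lemma~2.2}, so there is no in-paper argument to compare against. For context, the proof in \cite{NS} proceeds rather differently: one writes $\zeta_2=\bar\theta\zeta_1+\sqrt{1-|\theta|^2}\,\eta$ with $\eta$ independent of $\zeta_1$, conditions on $\zeta_1$, and expands $\log|1+w|$ as a power series whose expectation over the rotationally invariant law of $\eta$ picks out the diagonal terms, producing the dilogarithm directly. Your route via the Mellin transform and cumulant generating function is a genuine alternative; it trades the series manipulation of $\log|1+w|$ for the (equally classical) evaluation of $\sum_k H_k^2\rho^k$, and has the minor advantage that the cancellation of the $\gamma$ and $\log(1-\rho)$ terms is automatic once one works with $\partial_a\partial_b\log S$ rather than $\E[\log U\log V]$ itself.
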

Noting that
\[
\E\left[\hat{f}_L(z)\overline{\hat{f}_L(w)}\right]= \frac{K_L(z,w)}{K_L(z,z)^{1/2}K_L(w,w)^{1/2}}
\]
and applying the lemma we get
\[
\E\left[d\tilde{n}_L\times d\tilde{n}_L\right] = \frac14 \Delta_z\Delta_w \dl(J_L(z,w))\frac{dm(z)}{2\pi}\frac{dm(w)}{2\pi},
\]
as a distribution. It remains only to see that $\V[n_L(D)]=\E[\tilde{n}_L(D)^2]$.

We now use this formula to prove our key lemma.
\begin{lemma}\label{rdecomp}
For any $0<r<1$
\[
\V[n_L(r)]=\frac {L^2r^4}{2\pi(1-r^2)^2}I_L(r),
\]
where
\[
I_L(r)=\int_{-\pi}^\pi \frac{(1-r^2)^{2L}} {|1-r^2e^{i\theta}|^{2L}-(1-r^2)^{2L}} \frac {2(1-\cos\theta)} {|1-r^2e^{i\theta}|^{2}}\,d\theta.
\]
\end{lemma}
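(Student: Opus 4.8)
The plan is to start from the variance formula \eqref{varform} specialised to $D=D(0,r)$, expand the dilogarithm as a power series, and reduce the resulting double area integral to a single one-dimensional integral by exploiting the rotational symmetry of $J_1$ on circles centred at the origin. Writing $\frac14\dl(J_L(z,w))=\frac14\sum_{n\ge1}n^{-2}J_1(z,w)^{Ln}$ and noting that each $J_1^{Ln}$ is smooth (indeed real-analytic) and positive on $\overline{D}\times\overline{D}$, since $|1-z\overline{w}|^2$ does not vanish there, I can apply the Laplacians and integrate term by term. Two applications of Green's identity $\int_D\Delta u\,dm=\oint_{\partial D}\partial_n u\,ds$ convert $\int_D\int_D\Delta_z\Delta_w(J_1^{Ln})\,dm\,dm$ into the double boundary integral $\oint\oint\partial_{n_z}\partial_{n_w}(J_1^{Ln})\,|dz|\,|dw|$ over $\partial D(0,r)\times\partial D(0,r)$. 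Parametrising $z=\rho_1e^{i\alpha}$, $w=\rho_2e^{i\beta}$ and using that $J_1$ depends only on $\rho_1,\rho_2$ and $\theta=\alpha-\beta$, the outer angular integration contributes a factor $2\pi$ and the arc-length elements contribute $r^2$, leaving a single integral over $\theta$ of $g_s(\theta):=\partial_{\rho_1}\partial_{\rho_2}(J_1^s)|_{\rho_1=\rho_2=r}$.

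Next I would compute $g_s$ by logarithmic differentiation. Writing $J_1=PQ/M$ with $P=1-\rho_1^2$, $Q=1-\rho_2^2$ and $M=|1-\rho_1\rho_2e^{i\theta}|^2$, and setting $a=\partial_{\rho_1}\log J_1$ and $c=\partial_{\rho_1}\partial_{\rho_2}\log J_1$ evaluated at $\rho_1=\rho_2=r$ (note $\partial_{\rho_1}\partial_{\rho_2}\log P=\partial_{\rho_1}\partial_{\rho_2}\log Q=0$), one obtains $g_s=sJ_1^s(sa^2+c)$. Summing the series then gives
\[
\sum_{n\ge1}\frac{g_{Ln}(\theta)}{n^2}=L^2a^2\,\frac{J_1^L}{1-J_1^L}-Lc\,\log(1-J_1^L),
\]
using $\sum_n x^n=x/(1-x)$ and $\sum_n x^n/n=-\log(1-x)$ with $x=J_1^L<1$ for $\theta\neq0$. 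A direct calculation yields the explicit forms $a=-2r(1+r^2)(1-\cos\theta)/\big((1-r^2)M\big)$ (via the identity $M+(1-r^2)(r^2-\cos\theta)=(1+r^2)(1-\cos\theta)$) and $c=2\cos\theta\,M^{-1}-4r^2\sin^2\theta\,M^{-2}$, where now $M=1-2r^2\cos\theta+r^4=|1-r^2e^{i\theta}|^2$.

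The crucial step is to eliminate the logarithmic term, which does not appear in $I_L(r)$. I would integrate by parts in $\theta$ over $[-\pi,\pi]$, after observing the two identities that make everything collapse: first, $c=\frac{d}{d\theta}\!\left(\frac{2\sin\theta}{M}\right)$; and second, $\frac{d}{d\theta}\log(1-J_1^L)=\frac{2Lr^2\sin\theta}{M}\,\frac{J_1^L}{1-J_1^L}$. The boundary terms vanish because $C(\theta):=2\sin\theta/M$ vanishes at $\theta=0,\pm\pi$ (the only point where $\log(1-J_1^L)$ is singular is $\theta=0$, and there the factor $C\to0$ dominates the logarithm). After the integration by parts both surviving terms carry the common factor $J_1^L/(1-J_1^L)$, and their bracketed coefficient simplifies: using $a^2=4r^2(1+r^2)^2(1-\cos\theta)^2/\big((1-r^2)^2M^2\big)$ together with the identity $(1+r^2)^2(1-\cos\theta)+(1-r^2)^2(1+\cos\theta)=2M$, the coefficient reduces to $8L^2r^2(1-\cos\theta)/\big((1-r^2)^2M\big)$. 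Assembling the prefactors $\frac{1}{(2\pi)^2}\cdot\frac14\cdot 2\pi r^2=\frac{r^2}{8\pi}$ then produces exactly $\frac{L^2r^4}{2\pi(1-r^2)^2}I_L(r)$.

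I expect the main obstacle to be this algebraic collapse: the term produced by the $1/n$ part of the dilogarithm is a genuine logarithm, and it is not obvious a priori that it can be removed. What makes it work is the delicate coincidence that $c$ is an exact $\theta$-derivative of $2\sin\theta/M$, precisely the factor conjugate to the logarithmic derivative of $1-J_1^L$, so that one integration by parts converts the logarithmic term into another multiple of $J_1^L/(1-J_1^L)$ that combines cleanly with the $a^2$ term. A secondary, more routine technical point is justifying the term-by-term application of the Laplacians, Green's identity and the angular integration, together with the convergence of the resulting series and integral near the diagonal $\theta=0$ (where $J_1\to1$); this is controlled because the final integrand is bounded at $\theta=0$, the singular contributions cancelling as indicated.
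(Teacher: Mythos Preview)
Your argument is correct and leads to the stated formula, but the route differs from the paper's in one essential respect. The paper converts the double area integral into a boundary integral via Stokes' theorem for the $\bar\partial$-operator, obtaining
\[
\V[n_L(D)]=-\frac{1}{4\pi^2}\int_{\partial D}\int_{\partial D}\partial_{\bar z}\partial_{\bar w}\dl(J_L(z,w))\,d\bar z\,d\bar w,
\]
and then computes $\partial_{\bar z}\partial_{\bar w}\dl(J_L)$ directly from $\dl'(\zeta)=\zeta^{-1}\log\frac{1}{1-\zeta}$. Two terms appear, one carrying $\frac{J_L}{1-J_L}$ and one carrying $\log\frac{1}{1-J_L}$; the coefficient of the latter is
\[
\frac{1}{J_1}\frac{\partial^2 J_1}{\partial\bar z\,\partial\bar w}-\frac{1}{J_1^2}\frac{\partial J_1}{\partial\bar z}\frac{\partial J_1}{\partial\bar w},
\]
which vanishes \emph{identically} as a pointwise algebraic identity. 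Thus in the paper the logarithm never survives, and the remaining term immediately specialises on $\partial D(0,r)$ to the integrand of $I_L(r)$.

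Your version uses the real normal derivatives $\partial_{\rho_1},\partial_{\rho_2}$ via the classical Green identity. With that choice the analogue of the cross term, your $c=\partial_{\rho_1}\partial_{\rho_2}\log J_1$, does not cancel pointwise against the first-order contribution, so a genuine $\log(1-J_1^L)$ term persists. You then remove it by one integration by parts in $\theta$, using the pleasant observations that $c=\frac{d}{d\theta}\big(2\sin\theta/M\big)$ and $\frac{d}{d\theta}\log(1-J_1^L)=\frac{2Lr^2\sin\theta}{M}\,\frac{J_1^L}{1-J_1^L}$, after which the same algebraic collapse via $(1+r^2)^2(1-\cos\theta)+(1-r^2)^2(1+\cos\theta)=2M$ yields exactly the integrand of $I_L(r)$. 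The trade-off is that the paper's complex-variable choice makes the cancellation structural (no integration by parts, no boundary terms to check at $\theta=0$), whereas your real-variable choice is more elementary but requires the extra step and the care at $\theta=0$ that you correctly flag. Both are valid; the paper's is shorter, yours makes explicit what is hidden in the Wirtinger calculus.
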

\begin{proof}
For any $D\subset\D$ with piecewise smooth boundary, applying Stokes' Theorem to \eqref{varform} we get
\begin{align*}
\V[n_L(D)]&=\int_{D}\int_{D} \Delta_z\Delta_w \frac14\dl(J_L(z,w)) \frac{dm(z)}{2\pi} \frac{dm(w)}{2\pi}\\
&=-\frac {1}{4\pi^2} \int_{\partial D}\int_{\partial D} \frac{\partial}{\partial \bar{z}} \frac{\partial}{\partial \bar{w}} \dl(J_L(z,w))
\,d\bar{z}d\bar{w}.
\end{align*}
Recalling that the dilogarithm satisfies
\[
\frac{d}{d\zeta}\dl(\zeta)=\frac1\zeta\log\frac1{1-\zeta}.
\]
we have
\[
\frac{\partial}{\partial \bar{w}}\dl(J_L(z,w)) =\frac1{J_L} \log\frac1{1-J_L} \frac{\partial J_L}{\partial \bar{w}}= \frac LJ_1 \log\frac1{1-J_L}
\frac{\partial J_1}{\partial \bar{w}}
\]
and so
\begin{align*}
\frac{\partial}{\partial \bar{z}}\frac{\partial}{\partial \bar{w}}\dl(J_L(z,w))&=\frac {L^2}{J_1^2} \frac{J_L}{1-J_L} \frac{\partial J_1}{\partial
\bar{z}} \frac{\partial J_1}{\partial \bar{w}} + L \log\frac1{1-J_L} \left(\frac 1{J_1} \frac{\partial^2J_1}{\partial
\bar{z} \partial \bar{w}}-\frac 1{J_1^2} \frac{\partial J_1}{\partial \bar{z}} \frac{\partial J_1}{\partial \bar{w}}\right)
\end{align*}
Routine but tedious calculations yield
\[
\frac{\partial J_1}{\partial \bar{z}} = \frac{1-|w|^2}{|1-z\overline{w}|^2} \frac{w-z}{1-\overline{z}w}
\]
\[
\frac{\partial J_1}{\partial \bar{w}} = \frac{1-|z|^2}{|1-z\overline{w}|^2} \frac{z-w}{1-z\overline{w}}
\]
and
\[
\frac{\partial^2J_1}{\partial \bar{z} \partial \bar{w}}=-\frac{(z-w)^2}{|1-z\overline{w}|^4},
\]
so that
\[
\frac 1{J_1} \frac{\partial^2J_1}{\partial \bar{z} \partial \bar{w}}-\frac 1{J_1^2} \frac{\partial J_1}{\partial \bar{z}} \frac{\partial J_1}{\partial
\bar{w}}=0.
\]
We conclude that
\[
\V[n_L(D)]=\frac {L^2}{4\pi^2} \int_{\partial D}\int_{\partial D} \frac{J_L}{1-J_L} \frac1{(1-|z|^2)(1-|w|^2)} \frac{(z-w)^2}{|1-z\overline{w}|^2}
\,d\bar{z}d\bar{w}.
\]

We now suppose that $D=D(0,r)$ for $r<1$. Then, writing $z=r e^{i\theta}$ and $w=r e^{i\phi}$, after some simplifications we have
\[
\V[n_L(D)]=\frac {L^2r^4}{4\pi^2(1-r^2)^2} \int_{-\pi}^\pi \int_{-\pi}^\pi \frac{(1-r^2)^{2L}} {|1-r^2e^{i(\theta-\phi)}|^{2L}-(1-r^2)^{2L}} \frac
{2(1-\cos(\theta-\phi))} {|1-r^2e^{i(\theta-\phi)}|^{2}}\,d\theta d\phi
\]
We note that the integrand depends on the difference $\theta-\phi$, so one of the integrals immediately evaluates to $2\pi$. We are left with
\[
\V[n_L(D)]=\frac {L^2r^4}{2\pi(1-r^2)^2}\int_{-\pi}^\pi \frac{(1-r^2)^{2L}} {|1-r^2e^{i\theta}|^{2L}-(1-r^2)^{2L}} \frac {2(1-\cos\theta)}
{|1-r^2e^{i\theta}|^{2}}\,d\theta.
\]
as claimed.
\end{proof}

\section{Proof of Theorem \ref{varLis1or2}}\label{L12}
In this section we prove Theorem \ref{varLis1or2}. By Lemma~\ref{rdecomp} we need only compute $I_1(r)$ and $I_2(r)$, which we do in the following
proposition.
\begin{prop}\label{rdecompLis1or2}
For any $0<r<1$
\[
I_1(r)=\frac{2\pi(1-r^2)}{r^2(1+r^2)}
\]
and
\[
I_2(r)=\frac{2\pi(1-r^2)}{r^2}\left(\frac1{1+r^2}-\frac1{2\sqrt{1+r^4}}\right)
\]
\end{prop}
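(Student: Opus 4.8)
The plan is to reduce each of $I_1(r)$ and $I_2(r)$ to elementary trigonometric integrals of the form
\[
\int_{-\pi}^\pi \frac{d\theta}{a-b\cos\theta}=\frac{2\pi}{\sqrt{a^2-b^2}}\qquad(a>|b|\ge0).
\]
Throughout I write $s=r^2$ and $A(\theta)=|1-se^{i\theta}|^2=1-2s\cos\theta+s^2$, so that $(1-r^2)^2=(1-s)^2=1-2s+s^2$. The whole computation rests on the algebraic identity
\[
A(\theta)-(1-s)^2=2s(1-\cos\theta),
\]
which says that the factor $2(1-\cos\theta)$ appearing in the numerator of the integrand in Lemma~\ref{rdecomp} is, up to the constant $2s$, exactly the ``difference of denominators''. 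This is what forces the cancellations that make the integrals explicit, and it is the reason the integer cases are tractable (indeed $A^L-(1-s)^{2L}$ is always divisible by $A-(1-s)^2$).

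For $L=1$ the denominator is $A-(1-s)^2=2s(1-\cos\theta)$, which cancels the numerator factor $2(1-\cos\theta)$ and leaves
\[
I_1(r)=\frac{(1-s)^2}{s}\int_{-\pi}^\pi\frac{d\theta}{A(\theta)}.
\]
The remaining integral is the Poisson-kernel integral $\int_{-\pi}^\pi(1-2s\cos\theta+s^2)^{-1}\,d\theta=2\pi/(1-s^2)$; substituting $s=r^2$ and simplifying gives the stated value.

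For $L=2$ I factor $A^2-(1-s)^4=(A-(1-s)^2)(A+(1-s)^2)$. The first factor is again $2s(1-\cos\theta)$ and cancels the numerator, while $A+(1-s)^2=2C(\theta)$ with $C(\theta)=(1+s^2-s)-s\cos\theta$. Hence
\[
I_2(r)=\frac{(1-s)^4}{2s}\int_{-\pi}^\pi\frac{d\theta}{A(\theta)C(\theta)}.
\]
Since both $A$ and $C$ are affine in $\cos\theta$, a partial-fraction decomposition in the variable $\cos\theta$ gives
\[
\frac{1}{AC}=\frac{1}{(1-s)^2}\left(\frac{2}{A}-\frac{1}{C}\right),
\]
reducing everything to the two standard integrals $\int_{-\pi}^\pi A^{-1}\,d\theta=2\pi/(1-s^2)$ and $\int_{-\pi}^\pi C^{-1}\,d\theta=2\pi\big/\bigl((1-s)\sqrt{1+s^2}\bigr)$, where the latter uses $(1+s^2-s)^2-s^2=(1-s)^2(1+s^2)$. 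Collecting the terms and setting $s=r^2$ yields the claimed formula.

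The main obstacle is purely computational bookkeeping: getting the partial-fraction coefficients right (the relevant determinant is $a_A b_C-a_C b_A=\pm s(1-s)^2$), evaluating the discriminant $a^2-b^2$ for $C$ correctly, and tracking the prefactors through the cancellations. There is no conceptual difficulty once the identity $A-(1-s)^2=2s(1-\cos\theta)$ is spotted; everything afterward is elementary.
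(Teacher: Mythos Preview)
Your proof is correct, and it takes a genuinely different and more elementary route than the paper. The paper converts $I_L(r)$ into a contour integral over $\partial\D$ via $z=e^{i\theta}$ and then computes residues; for integer $L$ this requires locating the zeroes of $(1-r^2z)^L(z-r^2)^L-z^L(1-r^2)^{2L}$ inside the disc, which for $L=2$ means solving a quadratic depending on the nontrivial square root of unity. Your approach instead stays on the real line: the identity $A-(1-s)^2=2s(1-\cos\theta)$ cancels the numerator against the denominator, and for $L=2$ the further factorisation $A^2-(1-s)^4=(A-(1-s)^2)(A+(1-s)^2)$ plus a partial-fraction split in $\cos\theta$ reduces everything to two instances of the standard integral $\int_{-\pi}^\pi(a-b\cos\theta)^{-1}\,d\theta=2\pi/\sqrt{a^2-b^2}$. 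This is cleaner and avoids residue calculus entirely; the paper in fact notes (in a remark after its proof) that an elementary computation exists for $L=1$, and yours supplies one for $L=2$ as well. The price is that your method relies on the algebraic factorisation of $A^L-(1-s)^{2L}$ into factors affine in $\cos\theta$, which is straightforward for $L=1,2$ but less transparent for larger integers, whereas the residue framework in principle extends to all integer $L$ (even if the resulting algebra is, as the paper admits, intractable in practice).
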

\begin{proof}
We first suppose that $L$ is an integer. Then
\begin{align*}
I_L(r)&=\int_{-\pi}^\pi \frac{(1-r^2)^{2L}} {|1-r^2e^{i\theta}|^{2L}-(1-r^2)^{2L}} \frac {2(1-\cos\theta)} {|1-r^2e^{i\theta}|^{2}}\,d\theta\\
&=\int_{\partial\D} \frac{(1-r^2)^{2L}} {(1-r^2z)^{L}(1-r^2/z)^{L}-(1-r^2)^{2L}} \frac {(1-z)(1-1/z)} {(1-r^2z)(1-r^2/z)} \frac{dz}{iz}\\
&=\int_{\partial\D} -\frac1i\frac{(1-r^2)^{2L}z^{L-1}} {(1-r^2z)^{L}(z-r^2)^{L}-z^L(1-r^2)^{2L}} \frac {(1-z)^2} {(1-r^2z)(z-r^2)}\,dz.
\end{align*}
We note that the integrand has simple poles at $r^{-2}$ which lies outside the disc, and at $r^2$ with residue
\[
\frac1{ir^2} \frac{1-r^2}{1+r^2}.
\]
Finally there are poles at the zeroes of the polynomial
\[
(1-r^2z)^{L}(z-r^2)^{L}-z^L(1-r^2)^{2L}.
\]
This is equivalent to finding the zeroes of
\begin{equation}\label{quadomega}
(1-r^2z)(z-r^2)-\omega z(1-r^2)^{2}.
\end{equation}
for each $L$th root of unity $\omega$ satisfying $\omega^L=1$, which is in turn equivalent to finding the zeroes of
\begin{equation}\label{quadomega2}
(z-1)^2-\frac{1-\omega}{r^2}(1-r^2)z.
\end{equation}

Now if $L=1$ we have only $\omega=1$, and there is a double zero at $1$. This pole is removable, since there is a factor $(1-z)^2$ in the numerator of the
integrand. We conclude that
\[
I_1(r)=\frac{2\pi(1-r^2)}{r^2(1+r^2)}.
\]

If $L=2$ we have $\omega=1,-1$. Once more if $\omega=1$ there is a double zero at $1$ which gives a removable pole. If $\omega=-1$ we can solve
\eqref{quadomega2} easily and get two distinct zeroes
\[
z^{(i)}=1+\frac{(1-r^2)^2}{r^2}-\frac{1-r^2}{r^2}\sqrt{1+r^4}
\]
which is inside the unit disc and
\[
z^{(o)}=1+\frac{(1-r^2)^2}{r^2}+\frac{1-r^2}{r^2}\sqrt{1+r^4}
\]
outside. This yields
\[
(1-r^2z)^{2}(z-r^2)^{2}-z^2(1-r^2)^{4}=r^4(z-1)^2(z-z^{(i)})(z-z^{(o)})
\]
and so the integrand simplifies to
\[
-\frac1i\frac{(1-r^2)^{4}z} {r^4(1-r^2z)(z-r^2)} \frac {1} {(z-z^{(i)})(z-z^{(o)})}.
\]
Noting that $z_\omega^{(i)}$ is a zero of \eqref{quadomega}, we compute the residue at $z_\omega^{(i)}$ to be
\[
-\frac1i\frac{(1-r^2)^{4}} {-r^4(1-r^2)^2} \frac {1} {(z^{(i)}-z^{(o)})}=-\frac{1-r^2} {2ir^2\sqrt{1+r^4}}
\]
which gives
\[
I_2(r)=2\pi\left(\frac1{r^2} \frac{1-r^2}{1+r^2}-\frac{1-r^2} {2r^2\sqrt{1+r^4}}\right)
=\frac{2\pi(1-r^2)}{r^2}\left(\frac1{1+r^2}-\frac1{2\sqrt{1+r^4}}\right).
\]
\end{proof}
\begin{rems}
1. If we are only interested in the case $L=1$, we may compute $I_1(r)$ without recourse to residue calculus. First note that the integrand simplifies to
$|1-r^2e^{i\theta}|^{-2}$ (and some factors that depend on $r$). From the geometric series we have
\[
|1-r^2e^{i\theta}|^{-2}=\sum_{n,m=0}^\infty r^{2(n+m)}e^{i\theta (n-m)}.
\]
Integrating this expression term by term yields the result.

2. In principle, this should be computable for any integer $L$. We need to compute the zeroes of \eqref{quadomega2}. Since the product of the zeroes is
$1$ and the sum of the zeroes is $2+\frac{1-\omega}{r^2}(1-r^2)$ which has real part strictly greater than $2$, we see that there are two distinct zeroes,
one inside the disc and one outside, which we label $z_\omega^{(i)}$ and $z_\omega^{(o)}$ respectively. We therefore have
\[
(1-r^2z)^{L}(z-r^2)^{L}-z^L(1-r^2)^{2L}=(-r^2)^{L}(z-1)^2\prod_\omega(z-z_\omega^{(i)})(z-z_\omega^{(o)})
\]
where the product ranges over the $L-1$ non-trivial roots of unity. Thus the integrand simplifies to
\[
-\frac1i\frac{(1-r^2)^{2L}z^{L-1}} {(-r^2)^{L}(1-r^2z)(z-r^2)} \frac {1} {\prod_\omega(z-z_\omega^{(i)})(z-z_\omega^{(o)})}.
\]
Noting that $z_\omega^{(i)}$ is a zero of \eqref{quadomega}, we compute the residue at $z_\omega^{(i)}$ to be
\begin{align*}
-\frac1i \frac{(1-r^2)^{2L}(z_\omega^{(i)})^{L-1}} {(-r^2)^{L}(1-r^2z_\omega^{(i)})(z_\omega^{(i)}-r^2)} \frac {1} {(z_\omega^{(i)}-z_\omega^{(o)})
\prod_{\widetilde{\omega}\neq\omega}(z_\omega^{(i)}-z_{\widetilde{\omega}}^{(i)})(z_\omega^{(i)}-z_{\widetilde{\omega}}^{(o)})}\\
= -\frac1i \frac{(1-r^2)^{2L-2}(z_\omega^{(i)})^{L-2}} {(-r^2)^{L}\omega} \frac {1} {(z_\omega^{(i)}-z_\omega^{(o)})
\prod_{\widetilde{\omega}\neq\omega}(z_\omega^{(i)}-z_{\widetilde{\omega}}^{(i)})(z_\omega^{(i)}-z_{\widetilde{\omega}}^{(o)})}
\end{align*}
and so we conclude that
\[
I_L(r)=2\pi\left(\frac1{r^2} \frac{1-r^2}{1+r^2}-\sum_\omega\frac{(1-r^2)^{2L-2}(z_\omega^{(i)})^{L-2}} {(-r^2)^{L}\omega} \frac {1}
{(z_\omega^{(i)}-z_\omega^{(o)})
\prod_{\widetilde{\omega}\neq\omega}(z_\omega^{(i)}-z_{\widetilde{\omega}}^{(i)})(z_\omega^{(i)}-z_{\widetilde{\omega}}^{(o)})}\right).
\]

From here the algebra seems intractable and we have contented ourselves with considering only the values $L=1,2$. Mathematica yields an explicit
expression for $L=4$, however we have not been persistent enough to establish its veracity.

3. Mathematica also yields a closed expression if $L=1/2$ in terms of some special function, that is not terribly enlightening.
\end{rems}

\section{Proof of Theorems \ref{varrto1} and \ref{critvals}}\label{asymptotics}
In this section we prove Theorems \ref{varrto1} and \ref{critvals}. By Lemma~\ref{rdecomp} we need only compute the asymptotic behaviour of $I_L(r)$ as
$r\rightarrow1^-$. By examining the integrand it is clear that for $\theta$ smaller than $1-r^2$ the integrand is approximately constant, so we get a
contribution of size $(1-r^2)$. However if $|\theta|$ is close to $\pi$ the integrand is approximately $(1-r^2)^{2L}$. The important region of integration
therefore depends on whether or not $L>1/2$. The next proposition makes this reasoning precise, and completes the proof of Theorems~\ref{varrto1} and
\ref{critvals}~(b)--(d).
\begin{prop}
(a) For each fixed $L>1/2$, as $r\rightarrow1^-$,
\begin{align*}
I_L(r)&= 4(1-r) \int_{0}^{\infty} \frac1{(1+x^2)^L-1} \frac{x^2}{1+x^2}\,dx (1+o(1))\\
&=\sqrt{\pi}(1-r)\sum_{n=1}^{\infty} \frac{\Gamma(Ln-\frac12)}{\Gamma(Ln+1)}(1+o(1)).
\end{align*}
Moreover the quantity $o(1)$ can be taken to be uniform in $L$ for all $L>1$.

Furthermore we have
\[
I_L(r)= \frac{2(1-r)}{L-\frac12}\left(1-(1-r)^{2L-1}\right)(1+o(1))
\]
as $L\rightarrow1/2^+$ and $r\rightarrow1^-$, where the quantity $o(1)$ is uniform in $L$ and $r$.

(b) We have, as $r\rightarrow1^-$,
\[
I_{1/2}(r)=4(1-r)\log\frac{1}{1-r} (1+o(1)).
\]

(c) For each fixed $L<1/2$,
\[
I_L(r)=\frac{2\sqrt{\pi}\Gamma(\frac12-L)}{\Gamma(1-L)} (1-r)^{2L} (1+o(1)),
\]
as $r\rightarrow1^-$.

If $L\rightarrow1/2^-$ and $r\rightarrow1^-$ then
\[
I_L(r)= \frac{2(1-r)^{2L}}{\frac12-L}(1-(1-r)^{1-2L}) (1 + o(1))
\]
where the quantity $o(1)$ is uniform in both parameters.

We have
\[
I_L(r)= 2\pi\frac{(1-r)^{2L}}{1-(1-r)^{2L}} (1 + o(1))
\]
as $L\rightarrow0^+$, $r\rightarrow1^-$ and $\frac L{1-r}\to\infty$, where the quantity $o(1)$ is uniform in $L$, $r$, and $\frac L{1-r}$.
\end{prop}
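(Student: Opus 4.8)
The whole proposition concerns the single real integral
\[
I_L(r)=\int_{-\pi}^{\pi}\frac{(1-r^2)^{2L}}{|1-r^2e^{i\theta}|^{2L}-(1-r^2)^{2L}}\,\frac{2(1-\cos\theta)}{|1-r^2e^{i\theta}|^{2}}\,d\theta,
\]
and the heuristic already stated in the excerpt is the right guide: the balance between the $\theta\approx 0$ region (where the integrand is of size $\sim(1-r^2)^{-0}$ after the prefactor, contributing $\sim(1-r^2)$) and the $|\theta|\approx\pi$ region (where $|1-r^2e^{i\theta}|^{2L}$ is bounded away from $(1-r^2)^{2L}$, so the integrand is $\sim(1-r^2)^{2L}$) flips exactly at $L=1/2$. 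So the plan is to introduce a small cutoff $\delta=\delta(r)$ with $1-r^2\ll\delta\ll 1$, split $I_L$ at $|\theta|=\delta$, estimate each piece, and then let $\delta\to 0$ appropriately.

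Let me set $\epsilon=1-r^2$. Using the elementary estimate $|1-r^2e^{i\theta}|^2=(1-r^2)^2+2r^2(1-\cos\theta)=\epsilon^2+2r^2(1-\cos\theta)$, and $1-\cos\theta=\tfrac12\theta^2(1+O(\theta^2))$, I would substitute $\theta=\epsilon x$ (equivalently $2r^2(1-\cos\theta)\approx r^2\theta^2$, so $|1-r^2e^{i\theta}|^2\approx\epsilon^2(1+x^2)$ with $x=r\theta/\epsilon$) in the inner region $|\theta|<\delta$. Under this scaling the integrand becomes, to leading order,
\[
\frac{\epsilon^{2L}}{\epsilon^{2L}\bigl((1+x^2)^L-1\bigr)}\cdot\frac{r^2\theta^2}{\epsilon^2(1+x^2)}
=\frac{1}{(1+x^2)^L-1}\cdot\frac{x^2}{1+x^2},
\]
and $d\theta=(\epsilon/r)\,dx$, so the inner piece contributes $\sim 2(\epsilon/r)\int_0^{\delta/\epsilon}\frac{1}{(1+x^2)^L-1}\frac{x^2}{1+x^2}\,dx$. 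The convergence of $\int_0^\infty$ at infinity requires $(1+x^2)^L-1\sim x^{2L}$ to beat $x^2/(1+x^2)\sim 1$, i.e. $2L-2<-1$ fails — so for $L>1/2$ the tail integrand decays like $x^{2-2L}$ and is integrable, the integral converges to the stated constant, and extending the upper limit to $\infty$ costs only $o(1)$ (this is part (a), after replacing $\epsilon\approx 2(1-r)$ and $r\to1$). For $L=1/2$ the integrand is $\sim 1/x$ at infinity, giving the logarithmic divergence $\int_0^{\delta/\epsilon}\sim\log(\delta/\epsilon)\sim\log\frac{1}{1-r}$ (part (b)). For $L<1/2$ the scaled integral diverges like $(\delta/\epsilon)^{1-2L}$, signalling that the mass actually lives in the outer region $|\theta|\asymp 1$; there I would instead expand the integrand directly (the denominator $|1-r^2e^{i\theta}|^{2L}-\epsilon^{2L}\to|1-e^{i\theta}|^{2L}$) and compute $\int_{-\pi}^{\pi}\epsilon^{2L}\frac{2(1-\cos\theta)}{|1-e^{i\theta}|^{2L+2}}\,d\theta$, which is $\epsilon^{2L}$ times a convergent constant integral — and that constant is exactly $\frac{\sqrt\pi\,\Gamma(1/2-L)}{\Gamma(1-L)}$ after evaluating $\int_0^\pi (2\sin\tfrac\theta2)^{-2L}\,d\theta$ via a Beta-function identity (part (c)).

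\emph{The series identity and the uniform/boundary regimes.} To pass from the integral form of $c_L$ (for $L>1/2$) to the series $\sqrt\pi\sum_{n\ge1}\frac{\Gamma(Ln-1/2)}{\Gamma(Ln+1)}$, I would expand the geometric series $\frac{1}{(1+x^2)^L-1}=\sum_{n\ge1}(1+x^2)^{-Ln}$ and integrate termwise using $\int_0^\infty\frac{x^2}{(1+x^2)^{Ln+1}}\,dx=\tfrac12 B(\tfrac32,Ln-\tfrac12)=\frac{\sqrt\pi}{4}\frac{\Gamma(Ln-1/2)}{\Gamma(Ln+1)}$, justifying the interchange by positivity (monotone convergence). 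For the transitional regimes — the $L\to1/2^+$, $L\to1/2^-$, and $L\to0^+$ statements — the point is to make the cutoff estimate \emph{uniform}, so that the explicit prefactors like $\frac{1-(1-r)^{2L-1}}{L-1/2}$ appear. Here I would not send the upper limit to $\infty$; instead I keep $\int_0^{\delta/\epsilon}\frac{x^{2-2L}}{\cdots}$-type integrals with their genuine dependence on both $L$ and the cutoff, and evaluate the leading contribution of $\int^{T} x^{-2L}\,dx=\frac{T^{1-2L}-\text{const}}{1-2L}$ exactly, which produces the $\frac{1-(1-r)^{\pm(2L-1)}}{\pm(2L-1)}$ factors once $T\asymp 1/\epsilon$ and $\epsilon\asymp 2(1-r)$. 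The $L\to0^+$ case with $L/(1-r)\to\infty$ is governed entirely by the denominator approaching $1-(1-r)^{2L}$ uniformly, and the constant $2\pi$ comes from $\int_{-\pi}^\pi\frac{2(1-\cos\theta)}{|1-e^{i\theta}|^2}\,d\theta=2\pi$.

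\emph{Main obstacle.} The genuinely delicate part is \textbf{uniformity}, not the leading-order asymptotics. Establishing that the error terms $o(1)$ can be taken uniform in $L$ (for $L>1$, and throughout the transitional regimes) requires controlling the error in the approximation $1-\cos\theta=\tfrac12\theta^2(1+O(\theta^2))$ and in replacing $|1-r^2e^{i\theta}|^2$ by $\epsilon^2(1+x^2)$ \emph{uniformly} over the relevant range of $L$, and simultaneously choosing the cutoff $\delta=\delta(r)$ so that both the discarded-tail error and the outer-region contamination are negligible for all $L$ in the stated range at once. I expect the cleanest route is to bound the integrand above and below by explicit comparison functions depending monotonically on the parameters, so that dominated-convergence-style arguments yield the uniform $o(1)$; the near-critical cases $L\to1/2^\pm$ are the most sensitive, since there the convergent and divergent behaviours merge and the constant $c_L$ blows up like $1/(2L-1)$, so the matching of the integral asymptotics with the claimed closed forms must track that blow-up exactly.
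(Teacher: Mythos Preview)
Your approach is essentially the paper's: scale near $\theta=0$, split into an inner region that carries the mass for $L>1/2$ and an outer region that dominates for $L<1/2$, and track both pieces together in the transitional regimes. The one technical difference worth noting is that the paper makes the \emph{exact} change of variables $x=\dfrac{2r^2}{(1-r^2)^2}(1-\cos\theta)$ rather than your approximate linearisation $\theta=\epsilon x$; this yields the identity
\[
I_L(r)=\frac{1-r^2}{r^3}\int_0^{4r^2/(1-r^2)^2}\frac{1}{(1+x)^L-1}\,\frac{\sqrt x}{1+x}\,\frac{dx}{\sqrt{1-\tfrac{(1-r^2)^2}{4r^2}x}},
\]
so that the \emph{only} approximation left is the Jacobian factor $(1-\tfrac{(1-r^2)^2}{4r^2}x)^{-1/2}$, which is $\ge1$ and trivially bounded on each subinterval by splitting at a point like $(\log\tfrac{1}{1-r})^{-1}\cdot\tfrac{4r^2}{(1-r^2)^2}$. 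This eliminates the layer of error terms you anticipate from approximating $1-\cos\theta\approx\tfrac12\theta^2$ and, in particular, makes the uniformity arguments you rightly flag as the main obstacle substantially cleaner: the bounds become monotone in $L$ directly, and the $L\to0^+$ case is handled with a carefully chosen cutoff $\delta(L,r)=\exp\bigl(-\bigl(\tfrac{1-(1-r)^{2L}}{L}\bigr)^{1/2}\bigr)$ rather than relying on the denominator being ``approximately $1-(1-r)^{2L}$'' throughout. One minor slip: the tail of your scaled integrand decays like $x^{-2L}$, not $x^{2-2L}$, though your conclusion about the $L=1/2$ threshold is of course correct.
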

\begin{proof}
We first note that
\[
|1-r^2e^{i\theta}|^{2}=(1-r^2)^2+2r^2(1-\cos\theta)
\]
and so, from Lemma~\ref{rdecomp}
\begin{align*}
I_L(r)&=\int_{-\pi}^\pi\frac{(1-r^2)^{2L}} {|1-r^2e^{i\theta}|^{2L}-(1-r^2)^{2L}} \frac {2(1-\cos\theta)} {|1-r^2e^{i\theta}|^{2}}\,d\theta\\
&=\frac 2 {r^2}\int_{0}^\pi\left(\left(1+2r^2\frac{1-\cos\theta}{(1-r^2)^2}\right)^{L}-1\right)^{-1}
\left(1+\frac{(1-r^2)^2}{2r^2(1-\cos\theta)}\right)^{-1} \,d\theta.
\end{align*}
Making the change of variables $x=\frac{2r^2}{(1-r^2)^2}(1-\cos\theta)$ we see that
\begin{equation}\label{intinx}
I_L(r)= \frac {1-r^2} {r^3}\int_{0}^{\frac{4r^2}{(1-r^2)^2}} \frac1{(1+x)^L-1} \frac{\sqrt{x}}{1+x} \frac{dx}{\sqrt{1-\frac{(1-r^2)^2}{4r^2}x}}
\end{equation}

(a) We first assume that $L>1/2$. Bearing in mind the remarks preceding the statement of this lemma, we expect the main contribution to come from the
`small' values of $x$. Now
\begin{align*}
\int_{0}^{\left(\log\frac1{1-r}\right)^{-1}\frac{4r^2}{(1-r^2)^2}} \frac1{(1+x)^L-1} \frac{\sqrt{x}}{1+x} \frac{dx}{\sqrt{1-\frac{(1-r^2)^2}{4r^2}x}} &\\
=\int_{0}^{\left(\log\frac1{1-r}\right)^{-1}\frac{4r^2}{(1-r^2)^2}} \frac1{(1+x)^L-1} \frac{\sqrt{x}}{1+x}&\,dx (1+o(1))
\end{align*}
where the term $o(1)$ is uniform in $L$. Trivially
\begin{align*}
\int_{0}^{\left(\log\frac1{1-r}\right)^{-1}\frac{4r^2}{(1-r^2)^2}} \frac1{(1+x)^L-1} \frac{\sqrt{x}}{1+x}\,dx &\\ =\int_{0}^{\infty} \frac1{(1+x)^L-1}
\frac{\sqrt{x}}{1+x}&\,dx - \int_{\left(\log\frac1{1-r}\right)^{-1}\frac{4r^2}{(1-r^2)^2}}^{\infty} \frac1{(1+x)^L-1} \frac{\sqrt{x}}{1+x}\,dx,
\end{align*}
and these integrals clearly converge in this range of $L$. The change of variables $t=\sqrt{x}$ yields
\[
\int_{0}^{\infty} \frac1{(1+x)^L-1} \frac{\sqrt{x}}{1+x}\,dx = 2\int_{0}^{\infty} \frac1{(1+t^2)^L-1} \frac{t^2}{1+t^2}\,dt.
\]
The alternative change of variables $s=(1+x)^{-1}$ gives us ($B$ denotes the usual Beta function)
\begin{align*}
\int_{0}^{\infty} \frac1{(1+x)^L-1} \frac{\sqrt{x}}{1+x}\,dx &= \int_{0}^{1} \frac1{s^{-L}-1} \sqrt{\frac1s-1} \frac{ds}{s}\\
&= \int_{0}^{1} s^{L-\frac32}\frac1{1-s^{L}} \sqrt{1-s}\,ds\\
&= \sum_{n=0}^\infty \int_{0}^{1} s^{L(n+1)-\frac32} \sqrt{1-s}\,ds\\
&= \sum_{n=0}^\infty B\left(L(n+1)-\frac12,\frac32\right)\\
&= \sum_{n=0}^\infty \frac{\Gamma(L(n+1)-\frac12)\Gamma(\frac32)}{\Gamma(L(n+1)+1)}\\
&= \frac{\sqrt{\pi}}{2}\sum_{n=1}^\infty \frac{\Gamma(Ln-\frac12)}{\Gamma(Ln+1)}.
\end{align*}
As $L\rightarrow1/2^+$ we have
\begin{equation*}
\frac{\sqrt{\pi}}{2}\sum_{n=1}^\infty \frac{\Gamma(Ln-\frac12)}{\Gamma(Ln+1)} =
\frac{\sqrt{\pi}}{2}\left(\frac{\Gamma(L-\frac12)}{\Gamma(L+1)}+\sum_{n=2}^\infty \frac{\Gamma(Ln-\frac12)}{\Gamma(Ln+1)}\right) = \frac1{L-\frac12}+O(1)
\end{equation*}
since $\Gamma(z)$ has a simple pole with residue $1$ at $z=0$. Now, for a fixed value of $L$,
\begin{equation*}
\int_{\frac{1}{\log\frac1{1-r}}\frac{4r^2}{(1-r^2)^2}}^{\infty} \frac1{(1+x)^L-1} \frac{\sqrt{x}}{1+x}\,dx = o(1).
\end{equation*}
Moreover since
\[
\frac1{(1+x)^L-1}\leq\frac1{(1+x)^M-1}
\]
for $L\geq M$ and $x>0$ the term $o(1)$ may be taken to be uniform in $L$ for all $L\geq 1$ (say). As $L\rightarrow1/2^+$ we have
\begin{align*}
\int_{\left(\log\frac1{1-r}\right)^{-1}\frac{4r^2}{(1-r^2)^2}}^{\infty} \frac1{(1+x)^L-1} \frac{\sqrt{x}}{1+x}\,dx &=
\int_{\left(\log\frac1{1-r}\right)^{-1}\frac{4r^2}{(1-r^2)^2}}^{\infty} \frac1{x^{L+1/2}} + O\left(\frac1{x^{2L+1/2}}\right) \,dx\\
&=\frac{(1-r)^{2L-1}}{L-\frac12}\left(\log\frac1{1-r}\right)^{L-\frac12}(1+o(1)).
\end{align*}
We therefore have
\begin{equation*}
\int_{0}^{\left(\log\frac1{1-r}\right)^{-1}\frac{4r^2}{(1-r^2)^2}} \frac1{(1+x)^L-1} \frac{\sqrt{x}}{1+x} \frac{dx}{\sqrt{1-\frac{(1-r^2)^2}{4r^2}x}}
=\frac{\sqrt{\pi}}{2}\sum_{n=1}^\infty \frac{\Gamma(Ln-\frac12)}{\Gamma(Ln+1)} (1+o(1))
\end{equation*}
for a fixed value of $L$, where the term $o(1)$ is uniform in $L$ for all $L\geq 1$, while
\begin{align*}
\int_{0}^{\left(\log\frac1{1-r}\right)^{-1}\frac{4r^2}{(1-r^2)^2}} &\frac1{(1+x)^L-1} \frac{\sqrt{x}}{1+x} \frac{dx}{\sqrt{1-\frac{(1-r^2)^2}{4r^2}x}}\\
&=\frac1{L-\frac12}\left(1-(1-r)^{2L-1}\left(\log\frac1{1-r}\right)^{L-\frac12}\right)(1+o(1))+O(1)\\
&=\frac1{L-\frac12}\left(1-(1-r)^{2L-1}\right)(1+o(1))
\end{align*}
as $L\rightarrow1/2^+$.

We now show that the remaining contributions to \eqref{intinx} are negligible in comparison. We have, making the change of variables
$y=\frac{(1-r^2)^2}{4r^2}x$,
\begin{align*}
\int_{\left(\log\frac1{1-r}\right)^{-1}\frac{4r^2}{(1-r^2)^2}}^{\frac{4r^2}{(1-r^2)^2}} \frac1{(1+x)^L-1} \frac{\sqrt{x}}{1+x}
\frac{dx}{\sqrt{1-\frac{(1-r^2)^2}{4r^2}x}} &\simeq \int_{\left(\log\frac1{1-r}\right)^{-1}\frac{4r^2}{(1-r^2)^2}}^{\frac{4r^2}{(1-r^2)^2}} \frac1{x^{L+1/2}} \frac{dx}{\sqrt{1-\frac{(1-r^2)^2}{4r^2}x}}\\
&= (1-r^2)^{2L-1}\int_{\left(\log\frac1{1-r}\right)^{-1}}^1 \frac 1{y^{L+1/2}} \frac{dy}{\sqrt{1-y}}\\
&\simeq \frac{(1-r^2)^{2L-1}}{L-1/2}\left(\left(\log\frac1{1-r}\right)^{L-1/2}-1\right)
\end{align*}
which is easily seen to be $o(1)$ for fixed $L$. Moreover, using once more the fact that
\[
\frac1{(1+x)^L-1}\leq\frac1{(1+x)^M-1}
\]
for $L\geq M$ and $x>0$, we see that
\[
\int_{\left(\log\frac1{1-r}\right)^{-1}\frac{4r^2}{(1-r^2)^2}}^{\frac{4r^2}{(1-r^2)^2}} \frac1{(1+x)^L-1} \frac{\sqrt{x}}{1+x}
\frac{dx}{\sqrt{1-\frac{(1-r^2)^2}{4r^2}x}}
\]
is uniformly $o(1)$ for all $L\geq 1$, say. Finally it is not hard to see that
\begin{equation*}
(1-r^2)^{2L-1}\left(\left(\log\frac1{1-r^2}\right)^{L-1/2}-1\right)=\left(1-(1-r^2)^{2L-1}\right)o(1)
\end{equation*}
as $L\rightarrow1/2^+$.

We conclude that
\[
I_L(r)= 4(1-r) \int_{0}^{\infty} \frac1{(1+x^2)^L-1} \frac{x^2}{1+x^2}\,dx (1+o(1))
\]
for fixed $L>1/2$, and the term $o(1)$ is uniformly small for all $L\geq 1$, while
\[
I_L(r)= \frac{2(1-r)}{L-\frac12}\left(1-(1-r)^{2L-1}\right)(1+o(1))
\]
as $L\rightarrow1/2^+$ and $r\rightarrow1^-$.

(c) We now assume that $L<1/2$. We now aim to show that the main contribution to \eqref{intinx} comes from the `big' values of $x$. Again making the
change of variables $y=\frac{(1-r^2)^2}{4r^2}x$ we see that, for fixed $L$,
\begin{align*}
\int_{\log\frac1{1-r}}^{\frac{4r^2}{(1-r^2)^2}} \frac1{(1+x)^L-1} &\frac{\sqrt{x}}{1+x} \frac{dx}{\sqrt{1-\frac{(1-r^2)^2}{4r^2}x}} \\
&= \int_{\log\frac1{1-r}}^{\frac{4r^2}{(1-r^2)^2}} \frac1{x^{L+1/2}} \frac{dx}{\sqrt{1-\frac{(1-r^2)^2}{4r^2}x}} (1 + o(1)) \\
&= (1-r)^{2L-1}\int_{\log\frac1{1-r}\frac{(1-r^2)^2}{4r^2}}^1 \frac 1{y^{L+1/2}} \frac{dy}{\sqrt{1-y}} (1 + o(1)).
\end{align*}
Moreover the term $o(1)$ can be taken to be uniform for $L$ close to $1/2$. We have
\begin{equation*}
\int_{\log\frac1{1-r}\frac{(1-r^2)^2}{4r^2}}^1 \frac 1{y^{L+1/2}} \frac{dy}{\sqrt{1-y}}=\int_{0}^1 \frac 1{y^{L+1/2}}
\frac{dy}{\sqrt{1-y}}-\int_0^{\log\frac1{1-r}\frac{(1-r^2)^2}{4r^2}} \frac 1{y^{L+1/2}} \frac{dy}{\sqrt{1-y}}
\end{equation*}
and these integrals converge for $L<1/2$. Now
\[
\int_{0}^1 \frac 1{y^{L+1/2}} \frac{dy}{\sqrt{1-y}} =B\Big(\frac12-L,\frac12\Big) =\frac{\Gamma(\frac12-L)\Gamma(\frac12)}{\Gamma(1-L)}
=\frac{\Gamma(\frac12-L)\sqrt{\pi}}{\Gamma(1-L)}
\]
where, again, $B$ is the Beta function. As $L\rightarrow1/2^-$ we have
\begin{equation*}
\frac{\Gamma(\frac12-L)\sqrt{\pi}}{\Gamma(1-L)}=\frac1{\frac12-L}+O(1).
\end{equation*}
Also
\begin{align*}
\int_0^{\log\frac1{1-r}\frac{(1-r^2)^2}{4r^2}} \frac 1{y^{L+1/2}} \frac{dy}{\sqrt{1-y}}&= \int_0^{\log\frac1{1-r}\frac{(1-r^2)^2}{4r^2}} \left(\frac
1{y^{L+1/2}} + O(y^{1/2-L})\right) \,dy\\&=\frac{(1-r)^{1-2L}}{\frac12-L}\left(\log\frac1{1-r}\right)^{\frac12-L}+o(1),
\end{align*}
which is $o(1)$ for fixed $L$. We therefore have
\begin{equation*}
\int_{\log\frac1{1-r}}^{\frac{4r^2}{(1-r^2)^2}} \frac1{(1+x)^L-1} \frac{\sqrt{x}}{1+x} \frac{dx}{\sqrt{1-\frac{(1-r^2)^2}{4r^2}x}}
=(1-r)^{2L-1}\frac{\Gamma(\frac12-L)\sqrt{\pi}}{\Gamma(1-L)}(1 + o(1))
\end{equation*}
for fixed $L$, while
\begin{equation*}
\int_{\log\frac1{1-r}}^{\frac{4r^2}{(1-r^2)^2}} \frac1{(1+x)^L-1} \frac{\sqrt{x}}{1+x} \frac{dx}{\sqrt{1-\frac{(1-r^2)^2}{4r^2}x}}
=\frac{(1-r)^{2L-1}}{\frac12-L} (1-(1-r)^{1-2L})(1 + o(1))
\end{equation*}
as $L\rightarrow1/2^-$ and $r\rightarrow1^-$.

It remains to show that the remaining parts of \eqref{intinx} are small in comparison. Now
\[
\int_0^1 \frac1{(1+x)^L-1} \frac{\sqrt{x}}{1+x} \frac{dx}{\sqrt{1-\frac{(1-r^2)^2}{4r^2}x}} = O(1)
\]
and
\begin{align*}
\int_1^{\log\frac1{1-r}} \frac1{(1+x)^L-1} \frac{x^{3/2}}{(1+x)^2} \frac{dx}{\sqrt{1-\frac{(1-r^2)^2}{4r^2}x}} &\lesssim \int_1^{\frac{1}{1-r^2}}
\frac1{x^{L+1/2}}\,dx\\
&= \frac1{\frac12-L}\left(\left(\log\frac1{1-r}\right)^{1/2-L}-1\right).
\end{align*}
We therefore have
\[
I_L(r)= \frac{2\sqrt{\pi}\Gamma(\frac12-L)}{\Gamma(1-L)}(1-r)^{2L} (1 + o(1))
\]
for fixed $L<1/2$ and
\[
I_L(r)= \frac{2(1-r)^{2L}}{\frac12-L}(1-(1-r)^{1-2L}) (1 + o(1))
\]
as $L\rightarrow1/2^-$ and $r\rightarrow1^-$.

We now suppose that $L\rightarrow0^+$ and furthermore that $\frac L{1-r}\to\infty$, or equivalently, that $\E[n_L(r)]\to\infty$. We write
\begin{equation*}
\delta=\delta(L,r)=\exp\left(-\left(\frac{1-(1-r)^{2L}}{L}\right)^{1/2}\right)\to0
\end{equation*}
and note that
\begin{equation*}
\frac{\delta}{(1-r)^2}=\exp\left(-\left(\frac{1-(1-r)^{2L}}{L}\right)^{1/2}+2\log\frac1{1-r}\right)\to\infty.
\end{equation*}
Now
\begin{align*}
\frac{\delta^L}{(1-r)^{2L}}-1 &= \frac{1-(1-r)^{2L}}{(1-r)^{2L}}\left(1+\frac{\delta^L-1}{1-(1-r)^{2L}}\right)\\
&= \frac{1-(1-r)^{2L}}{(1-r)^{2L}}\left(1+O\left(\frac{L}{1-(1-r)^{2L}}\right)^{1/2}\right)\\
&= \frac{1-(1-r)^{2L}}{(1-r)^{2L}}(1+o(1))
\end{align*}
which means that
\begin{equation*}
(1+x)^L-1=\frac{1-(1-r)^{2L}}{(1-r)^{2L}}(1+o(1))
\end{equation*}
for all $\frac{\delta}{(1-r)^2}<x<\frac{4r^2}{(1-r^2)^2}$, where the term $o(1)$ is uniform in $L$ and $r$. We therefore have, with the same change of
variables $y=\frac{(1-r^2)^2}{4r^2}x$,
\begin{align*}
\int_{\frac{\delta}{(1-r)^2}}^{\frac{4r^2}{(1-r^2)^2}} \frac1{(1+x)^L-1} \frac{\sqrt{x}}{1+x} &\frac{dx}{\sqrt{1-\frac{(1-r^2)^2}{4r^2}x}} \\
&= \frac{(1-r)^{2L}}{1-(1-r)^{2L}}\int_{\frac{\delta}{(1-r)^2}}^{\frac{4r^2}{(1-r^2)^2}} \frac1{\sqrt{x}} \frac{dx}{\sqrt{1-\frac{(1-r^2)^2}{4r^2}x}} (1 + o(1)) \\
&= \frac{(1-r)^{2L-1}}{1-(1-r)^{2L}}\int_{\frac{(1+r)^2}{4r^2}\delta}^1 \frac 1{\sqrt{y}} \frac{dy}{\sqrt{1-y}} (1 + o(1))\\
&= \frac{(1-r)^{2L-1}}{1-(1-r)^{2L}}B(1/2,1/2) (1 + o(1))\\
&= \pi\frac{(1-r)^{2L-1}}{1-(1-r)^{2L}} (1 + o(1)).
\end{align*}

We now show that the remaining contributions to \eqref{intinx} are of smaller order. We have
\[
\int_0^1 \frac1{(1+x)^L-1} \frac{\sqrt{x}}{1+x} \frac{dx}{\sqrt{1-\frac{(1-r^2)^2}{4r^2}x}} \simeq \int_0^1 \frac1{L\log(1+x)} \sqrt{x} \,dx =  O(L^{-1}).
\]
Using our hypothesis that $\frac L{1-r}\to\infty$ we see that
\begin{equation*}
  \frac1L=\frac{(1-r)^{2L-1}}{1-(1-r)^{2L}}o(1).
\end{equation*}
Also
\begin{align*}
\int_{1}^{\frac{\delta}{(1-r)^{2-4L}}} \frac1{(1+x)^L-1} \frac{\sqrt{x}}{1+x} \frac{dx}{\sqrt{1-\frac{(1-r^2)^2}{4r^2}x}} &\lesssim
\frac1L \int_1^{\frac{\delta}{(1-r)^{2-4L}}} \frac{dx}{\sqrt{x}} \\
&\lesssim \frac1L\sqrt{\delta}(1-r)^{2L-1}\\
&= \frac{(1-r)^{2L-1}}{1-(1-r)^{2L}}o(1).
\end{align*}
Finally, if $1/2<(1-r)^L<1$, then we have
\begin{align*}
\int_{\frac{\delta}{(1-r)^{2-4L}}}^{\frac{\delta}{(1-r)^2}} \frac1{(1+x)^L-1} \frac{\sqrt{x}}{1+x} \frac{dx}{\sqrt{1-\frac{(1-r^2)^2}{4r^2}x}} &\lesssim
\frac1L \int_{\frac{\delta}{16(1-r)^2}}^{\frac{\delta}{(1-r)^2}} \frac{dx}{\sqrt{x}} \\
&\lesssim \frac1L\sqrt{\delta}(1-r)^{2L-1}\\
&= \frac{(1-r)^{2L-1}}{1-(1-r)^{2L}}o(1),
\end{align*}
while if $0<(1-r)^L<1/2$ then, since
\begin{equation*}
\frac{\delta^L}{(1-r)^{2L-4L^2}}\geq\frac{\delta^L}{(1-r)^{L}}\geq\frac32
\end{equation*}
for L sufficiently small, we have
\begin{equation*}
(1+x)^L-1\gtrsim x^L
\end{equation*}
for all $x>\frac{\delta}{(1-r)^{2-4L}}$ and so
\begin{align*}
\int_{\frac{\delta}{(1-r)^{2-4L}}}^{\frac{\delta}{(1-r)^2}} \frac1{(1+x)^L-1} \frac{\sqrt{x}}{1+x} \frac{dx}{\sqrt{1-\frac{(1-r^2)^2}{4r^2}x}} &\lesssim
\int_{\frac{\delta}{(1-r)^{2-4L}}}^{\frac{\delta}{(1-r)^2}} \frac{dx}{x^{L+1/2}} \\
&\lesssim \delta^{1/2-L}(1-r)^{2L-1}\\
&= \frac{(1-r)^{2L-1}}{1-(1-r)^{2L}}o(1).
\end{align*}

We have therefore shown that
\[
I_L(r)= 2\pi\frac{(1-r)^{2L}}{1-(1-r)^{2L}} (1 + o(1))
\]
as $L\rightarrow0^+$, $r\rightarrow1^-$ and $\frac L{1-r}\to\infty$.

(b) We finally consider the critical case $L=1/2$; the integral we want to estimate is (see \eqref{intinx})
\[
\int_{0}^{\frac{4r^2}{(1-r^2)^2}} \frac1{\sqrt{1+x}-1} \frac{\sqrt{x}}{1+x} \frac{dx}{\sqrt{1-\frac{(1-r^2)^2}{4r^2}x}}.
\]
It is clear that
\[
\int_{0}^{1} \frac1{\sqrt{1+x}-1} \frac{\sqrt{x}}{1+x} \frac{dx}{\sqrt{1-\frac{(1-r^2)^2}{4r^2}x}} = O(1).
\]
and that
\begin{align*}
\int_{1}^{\log\frac1{1-r}} \frac1{\sqrt{1+x}-1} \frac{\sqrt{x}}{1+x} \frac{dx}{\sqrt{1-\frac{(1-r^2)^2}{4r^2}x}} &\lesssim
\int_{1}^{\log\frac1{1-r}} \frac1{x}\,dx\\
&= \log\log\frac1{1-r}\\
&= o(\log\frac1{1-r}).
\end{align*}
We finally compute that
\[
\int_{\log\frac1{1-r}}^{\frac{4r^2}{(1-r^2)^2}} \frac1{\sqrt{1+x}-1} \frac{\sqrt{x}}{1+x} \frac{dx}{\sqrt{1-\frac{(1-r^2)^2}{4r^2}x}} =
\int_{\log\frac1{1-r}}^{\frac{4r^2}{(1-r^2)^2}} \frac1{x} \frac{dx}{\sqrt{1-\frac{(1-r^2)^2}{4r^2}x}} (1+o(1))
\]
Once more making the change of variables $y=\frac{(1-r^2)^2}{4r^2}x$ we see that
\begin{align*}
\int_{\log\frac1{1-r}}^{\frac{4r^2}{(1-r^2)^2}} \frac1{x} \frac{dx}{\sqrt{1-\frac{(1-r^2)^2}{4r^2}x}} &=
\int_{\frac{(1-r^2)^2}{4r^2}\log\frac1{1-r}}^1 \frac 1y \frac{dy}{\sqrt{1-y}}\\
&= \left.\log\frac{1-\sqrt{1-y}}{1+\sqrt{1-y}}\right|_{\frac{(1-r^2)^2}{4r^2}\log\frac1{1-r}}^1\\
&= \log(\frac{4r^2}{(1-r^2)^2\log\frac1{1-r}}) + O(1)\\
&= 2\log\frac 1{1-r}(1+o(1)).
\end{align*}
\end{proof}

To prove Theorem~\ref{critvals}~(a) we note that, since
\begin{equation*}
  \lim_{x\to\infty}\frac{x^a\Gamma(x)}{\Gamma(x+a)}=1
\end{equation*}
for any real $a$, we have for $L$ large
\[
\frac{\Gamma(Ln-\frac12)}{\Gamma(Ln+1)}=\frac{\Gamma(Ln-\frac12)}{Ln\Gamma(Ln)}=(Ln)^{-3/2}(1+o(1))
\]
where the error term $o(1)$ is uniform in $n$, and so
\[
\sum_{n=1}^\infty\frac{\Gamma(Ln-\frac12)}{\Gamma(Ln+1)}=L^{-3/2}\sum_{n=1}^\infty n^{-3/2}(1+o(1)).
\]

\section*{Acknowledgements}
This work was undertaken as part of my PhD studies, which were co-supervised by Xavier Massaneda and Joaquim Ortega-Cerd\`a. Mikhail Sodin first suggested
I work on this problem. I am grateful to all three of them, and also to Ron Peled and Kristian Seip for many useful discussions and suggestions. I first
began this work when I was a visitor at the Department of Mathematical Sciences, NTNU, Trondheim. I thank them for their hospitality.

\begin{bibdiv}
\begin{biblist}

\bib{FH}{article}{
   author={Forrester, P. J.},
   author={Honner, G.},
   title={Exact statistical properties of the zeros of complex random polynomials},
   journal={J. Phys. A},
   volume={32},
   date={1999},
   number={16},
   pages={2961--2981},
   issn={0305-4470},
   review={\MR{1690355 (2000h:82047)}},
   doi={10.1088/0305-4470/32/16/006},
}

\bib{HKPV}{book}{
   author={Hough, John Ben},
   author={Krishnapur, Manjunath},
   author={Peres, Yuval},
   author={Vir{\'a}g, B{\'a}lint},
   title={Zeros of Gaussian analytic functions and determinantal point processes},
   series={University Lecture Series},
   volume={51},
   publisher={American Mathematical Society},
   place={Providence, RI},
   date={2009},
   pages={x+154},
   isbn={978-0-8218-4373-4},
   review={\MR{2552864 (2011f:60090)}},
}

\bib{NS}{article}{
   author={Nazarov, Fedor},
   author={Sodin, Mikhail},
   title={Fluctuations in random complex zeroes: asymptotic normality revisited},
   journal={Int. Math. Res. Not. IMRN},
   date={2011},
   number={24},
   pages={5720--5759},
   issn={1073-7928},
   review={\MR{2863379 (2012k:60103)}},
}

\bib{PV}{article}{
   author={Peres, Yuval},
   author={Vir{\'a}g, B{\'a}lint},
   title={Zeros of the i.i.d.\ Gaussian power series: a conformally invariant determinantal process},
   journal={Acta Math.},
   volume={194},
   date={2005},
   number={1},
   pages={1--35},
   issn={0001-5962},
   review={\MR{2231337 (2007m:60150)}},
   doi={10.1007/BF02392515},
}

\bib{SZ08}{article}{
   author={Shiffman, Bernard},
   author={Zelditch, Steve},
   title={Number variance of random zeros on complex manifolds},
   journal={Geom. Funct. Anal.},
   volume={18},
   date={2008},
   number={4},
   pages={1422--1475},
   issn={1016-443X},
   review={\MR{2465693 (2009k:32019)}},
   doi={10.1007/s00039-008-0686-3},
}

\end{biblist}
\end{bibdiv}

\end{document}